\documentclass[10pt,b5paper]{article}
\usepackage[T1]{fontenc}
\usepackage[utf8]{inputenc}
\usepackage{amsmath,amssymb,amsthm}
\usepackage{geometry}
\newcommand{\E}{\mathbb E}
\newcommand{\Z}{\mathbb Z}
\newcommand{\R}{\mathbb R}
\newcommand{\Mult}{\operatorname{Mult}}

\theoremstyle{plain}
\newtheorem{theorem}{Theorem}[section]
\newtheorem{proposition}[theorem]{Proposition}
\newtheorem{corollary}[theorem]{Corollary}

\theoremstyle{remark}
\newtheorem{remark}[theorem]{Remark}

\title{Multinomial probabilities near the mode:\\
integer modes and the complete local expansion}
\author{
N.~Elezovi\'c\\
Department of Applied Mathematics,\\
Faculty of Electrical Engineering and Computing,\\
University of Zagreb, 10000 Zagreb, Croatia\\
\texttt{neven.elezovic@fer.hr}
}
\date{July 2026}

\begin{document}
\maketitle

\begin{abstract}
	Let $X\sim\Mult(N;p_1,\dots,p_n)$ with $p_i>0$ and $\sum_ip_i=1$.  We study
	$\Pr\{X=m\}$ for lattice points near $Np$ from two complementary viewpoints.  First, we give an
	exact integer-mode criterion: the multinomial mode is the Jefferson--D'Hondt divisor
	apportionment, given when the mode is unique by $m_i=\lfloor (N+d)p_i\rfloor$, $d\in[0,n)$.  For
	$n\ge3$ the mode need not be obtained by rounding each $Np_i$ to a neighbouring integer; the
	excess $m_i-\lceil Np_i\rceil$ can reach $n-2$, so the two-sided localisation
	$|m_i-Np_i|<1$ stated in a standard reference is not valid in general.  Second, by writing the mass as a gamma quotient with
	unequal scalings, we derive the complete local expansion of $\Pr\{X=m\}$ in integer powers of
	$N^{-1}$, with all coefficients in closed Bernoulli-polynomial form.  The expansion contains the
	known $O(N^{-1})$ local limit theorem, reduces to the binomial local mass for $n=2$ and to the
	central multinomial coefficient in the symmetric case, and admits Ces\`aro averages of the
	oscillating coefficients in closed $\zeta$-value form.
\end{abstract}

\medskip\noindent\textbf{2020 Mathematics Subject Classification.}
	60C05, 62E20, 41A60, 11B68, 91B12.

\medskip\noindent\textbf{Keywords.}
	Multinomial distribution; mode; Jefferson--D'Hondt apportionment; local limit theorem;
	asymptotic expansion; Bernoulli polynomials.


\section{Introduction}\label{sec:intro}

	Let $X=(X_1,\dots,X_n)\sim\Mult(N;\mathbf p)$, $p_i>0$, $\sum_ip_i=1$.  This paper studies the
	probability $\Pr\{X=m\}$ for a lattice point $m$ near the mean $(Np_1,\dots,Np_n)$.  The common
	starting point is the separable concavity of the log-probability on the slice $\sum_im_i=N$
	(\S\ref{sec:setup}).  Its lattice maximiser gives the integer mode (\S\ref{sec:mode}), while its
	Stirling development gives the complete local expansion (\S\ref{sec:expansion}).  The results are
	multivariate counterparts of the binomial analyses in \cite{be_paper22,be_paper23}.

\paragraph{The mode.}  We show (Theorem~\ref{thm:mode}) that the mode of the multinomial is exactly
	the \emph{Jefferson\,/\,D'Hondt divisor apportionment}: the allocation produced by a single common
	multiplier $\lambda=N+d$, which when the mode is unique takes the floor form
	$m_i=\lfloor\lambda p_i\rfloor$, $d\in[0,n)$, fixed by $\sum_im_i=N$
	(Proposition~\ref{prop:mode-ties-floor}; at a tie no floor form exists, see
	Remark~\ref{rem:tie-warning}).  The characterisation itself is known, and in the apportionment
	literature it is standard: the transfer inequalities are the max--min inequality characterising
	divisor methods, and the multiplier, highest-averages and tie forms are all in
	Pukelsheim~\cite{pukelsheim}.  In the probabilistic literature the multiplier form is Finucan's
	\cite{finucan1964}, the greedy ``rank the quotients $p_i/k$'' form is White and Hendy's
	\cite{white_hendy2010}, and the water-level reading is the resource-allocation view of
	\cite{ral2019} --- all three being the same object.  What the identification with apportionment
	buys is not a new theorem but access to the associated notions of quota, large-party bias and tie
	structure.  The consequence we draw is that
	\emph{the mode is not a rounding of $Np$ to neighbouring integers}: a dominant bin can be
	rounded \emph{above} its own ceiling (the Jefferson large-party bias) while smaller bins remain
	to their floor.  Quantitatively, the excess $m_i-\lceil Np_i\rceil$ can reach $n-2$
	(Proposition~\ref{prop:excess}), showing that the two-sided localisation $|m_i-Np_i|<1$
	stated in the standard reference \cite[Ch.~35]{jkb-dmd} is not valid for $n\ge3$
	(Remark~\ref{rem:jkb}).  Correcting that localisation is, in our view, the most useful item in
	\S\ref{sec:mode}; the excess bound itself is a short consequence of the classical lower-quota
	theorem \cite{by1975} and would be routine to a worker in apportionment.  The binomial mode
	$\lfloor(N+1)p\rfloor$ is the case $n=2$, $d=1$; and when every $Np_i$ is an integer the mode is
	the unique point $(Np_i)$, for every $n\ge2$ (\S\ref{sec:nodichotomy}).

\paragraph{The local expansion.}  Writing
	$\Pr\{X=m\}=\Gamma(N+1)\prod_ip_i^{m_i}/\prod_i\Gamma(m_i+1)$ as a quotient of gamma functions with
	unequal linear scalings $p_i$, and applying the modular expansion scheme of Buri\'c, Elezovi\'c and
	\v Simi\'c \cite{be043,be047}, we obtain (\S\ref{sec:expansion}) the complete asymptotic expansion,
	for $t_i=m_i-Np_i$ bounded,
	\begin{align*}
		\Pr\{X=m\}
		&=\frac{1}{(2\pi N)^{(n-1)/2}(p_1\cdots p_n)^{1/2}}\,
			\exp\Bigl(\sum_{k\ge1}\frac{c_k}{N^k}\Bigr),\\
		c_k
		&=\frac{(-1)^{k+1}}{k(k+1)}
		\Bigl[B_{k+1}-\sum_{i=1}^np_i^{-k}B_{k+1}(t_i+1)\Bigr],
	\end{align*}
	in integer powers of $N^{-1}$, to all orders, its coefficients power sums of Bernoulli polynomials
	over the bins with weights $p_i^{-k}$.  The expansion \emph{through} $O(N^{-1})$ is prior art: the
	multinomial local limit theorem to that order is due to Arenbaev \cite{arenbaev1976} (its
	symmetrised form to Siotani and Fujikoshi \cite{siotani_fujikoshi1984}, the binomial case to
	Prokhorov \cite{prokhorov1953}), and it is re-derived, by the same Stirling\,/\,Taylor route and in
	the $\sqrt N$ (Edgeworth) gauge, by Ouimet \cite{ouimet2021}; indeed Ouimet's $N^{-1}$ constant
	$\tfrac1{12}\bigl(1-\sum_ip_i^{-1}\bigr)$ is exactly our $c_1$ at the central point $t=0$ (not at
	the mode, where the displacements $t_i$ are generically nonzero), the two expansions agreeing
	where they overlap.  The two results are not comparable, and the trade should be stated plainly:
	Theorem~\ref{thm:expansion} is confined to $|t_i|\le B$ with $B$ \emph{fixed}, whereas Ouimet's
	local limit theorem is uniform over a range of $m$ growing with $N$, which is what makes it usable
	in statistics.  We buy all orders at the price of the range (see Remark~\ref{rem:range}).  The
	contribution here is the \emph{complete, all-orders, closed
	Bernoulli form} and its structure: the single gamma-quotient recursion behind it, the carrier power sum
	$\Sigma_k=\sum_ip_i^{-k}$ and the even (entropy) normalisation (\S\ref{sec:xiform}), the reduction
	to the binomial local mass at $n=2$ (Proposition~\ref{prop:n2}) and to the central multinomial
	coefficient in the symmetric case (\S\ref{sec:symmetric}), and the averaging of the oscillating
	coefficients over the sub-torus $\sum_i\{Np_i\}\equiv0$ (\S\ref{sec:cesaro}), where a Weyl
	equidistribution produces closed $\zeta$-value constants.

\section{Setup}\label{sec:setup}

	Let $X=(X_1,\dots,X_n)\sim\Mult(N;\mathbf p)$, $p_i>0$, $\sum_ip_i=1$.  For $m\in\Z_{\ge0}^n$ with
	$\sum_im_i=N$,
	\begin{equation}\label{eq:pmf}
		\Pr\{X=m\}=\binom{N}{m_1,\dots,m_n}\prod_ip_i^{m_i}
		=\frac{\Gamma(N+1)}{\prod_i\Gamma(m_i+1)}\prod_ip_i^{m_i}.
	\end{equation}
	Write $m_i=Np_i+t_i$; the constraint gives
	\begin{equation}\label{eq:tsum}
		\sum_it_i=0 .
	\end{equation}
	On the slice $\sum m_i=N$ the log-probability is a \emph{separable, concave} lattice function,
	\begin{equation}\label{eq:sep}
		\log\Pr\{X=m\}=\log N!+\sum_ig_i(m_i),\qquad g_i(k)=k\log p_i-\log k!,
	\end{equation}
	with strictly decreasing forward difference
	\begin{equation}\label{eq:marg}
		\Delta g_i(k)=g_i(k+1)-g_i(k)=\log\frac{p_i}{k+1}.
	\end{equation}
	Both parts of the paper rest on \eqref{eq:sep}: the mode is its lattice maximiser, and the local
	expansion is its Stirling development.

\section{Exact criteria for the integer mode}\label{sec:mode}

\subsection{The characterisation}

\begin{theorem}[The multinomial mode is the D'Hondt/Jefferson apportionment]\label{thm:mode}
	Let $\sum_im_i=N$, $m_i\ge0$.  The following are equivalent.
	\begin{enumerate}
	\item[\textup{(a)}] $m$ maximises \eqref{eq:pmf}.
	\item[\textup{(b)}] \emph{(Transfer inequalities.)}  $\dfrac{m_j}{p_j}\le\dfrac{m_i+1}{p_i}$ for all
		$i,j$; equivalently $\displaystyle\max_j\frac{m_j}{p_j}\le\min_i\frac{m_i+1}{p_i}$.
	\item[\textup{(c)}] \emph{(Common multiplier / water level.)}  There is a common $\lambda>0$ with
		\begin{equation}\label{eq:multiplier}
			m_i\ \le\ \lambda p_i\ \le\ m_i+1\qquad(1\le i\le n);
		\end{equation}
		the admissible $\lambda$ are exactly the closed interval
		$\bigl[\max_jm_j/p_j,\ \min_i(m_i+1)/p_i\bigr]$, and summing \eqref{eq:multiplier} forces
		$\lambda\in[N,N+n]$.
	\item[\textup{(d)}] \emph{(Greatest divisors.)}  $m_i$ is the number of the $N$ largest members of
		the multiset $\{p_i/k:1\le i\le n,\ k\ge1\}$ that lie in bin $i$, ties at the boundary value
		being resolved in any way consistent with $\sum_im_i=N$.
	\end{enumerate}
\end{theorem}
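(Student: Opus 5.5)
The plan is to prove (a)$\Leftrightarrow$(b) from the separable concavity \eqref{eq:sep}--\eqref{eq:marg}, then (b)$\Leftrightarrow$(c) and (c)$\Leftrightarrow$(d) by elementary rearrangements.

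\emph{(a)$\Rightarrow$(b).} If $m$ is a maximiser, no unit transfer from bin $j$ to bin $i$ (with $m_j\ge1$, $i\ne j$) can increase $\sum_ig_i$. By \eqref{eq:marg} the change in log-probability is $\Delta g_i(m_i)-\Delta g_j(m_j-1)=\log\frac{p_i}{m_i+1}-\log\frac{p_j}{m_j}$. Requiring this to be $\le0$ is exactly $m_j/p_j\le(m_i+1)/p_i$. The inequality is trivial when $m_j=0$ or $i=j$.

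\emph{(b)$\Rightarrow$(a).} This is the main step: a local optimality condition must imply global optimality on the lattice slice. I would use the standard exchange argument for separable concave functions. Take any $m'$ with $\sum m'_i=N$, $m'\neq m$. Write $I=\{i:m'_i>m_i\}$ and $J=\{j:m'_j<m_j\}$; the total surplus equals the total deficit, say $s$. By strict concavity (decreasing $\Delta g$), we have
\[
g_i(m'_i)-g_i(m_i)\le (m'_i-m_i)\,\Delta g_i(m_i)=(m'_i-m_i)\log\tfrac{p_i}{m_i+1}\quad(i\in I),
\]
\[
g_j(m'_j)-g_j(m_j)\le -(m_j-m'_j)\,\Delta g_j(m_j-1)=-(m_j-m'_j)\log\tfrac{p_j}{m_j}\quad(j\in J).
\]
Summing, the total change is at most $s\bigl(\max_{i}\log\frac{p_i}{m_i+1}-\min_{j}\log\frac{p_j}{m_j}\bigr)$, and this is $\le0$ by (b). So $m$ is a maximiser. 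The same computation shows that equality forces ties in (b), which is useful later for uniqueness.

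\emph{(b)$\Leftrightarrow$(c).} Inequality \eqref{eq:multiplier} says $m_i/p_i\le\lambda\le(m_i+1)/p_i$ for every $i$. A common $\lambda$ exists if and only if $\max_j m_j/p_j\le\min_i(m_i+1)/p_i$, which is (b). The set of admissible $\lambda$ is then exactly that closed interval. Multiplying \eqref{eq:multiplier} by nothing and summing over $i$, with $\sum p_i=1$, gives $N\le\lambda\le N+n$.

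\emph{(c)$\Leftrightarrow$(d).} For a fixed threshold $\mu>0$, the number of members $p_i/k$ of bin $i$ with $p_i/k\ge\mu$ is $\lfloor p_i/\mu\rfloor$. Set $\mu=1/\lambda$. Condition (c) says that every quotient $p_i/k$ with $k\le m_i$ is $\ge 1/\lambda$, and every quotient with $k\ge m_i+1$ is $\le1/\lambda$. So the chosen $N$ quotients, namely $\{p_i/k: k\le m_i\}$, are the $N$ largest, up to ties at the value $1/\lambda$; this gives (d). Conversely, given a selection as in (d), let $1/\lambda$ be the $N$-th largest value. Each selected quotient is $\ge1/\lambda$ and each unselected one is $\le1/\lambda$. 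Because the quotients in bin $i$ decrease in $k$, a consistent selection takes an initial segment $k=1,\dots,m_i$ in each bin, which yields \eqref{eq:multiplier}. The only delicate point is phrasing the tie resolution: any tied quotients at the boundary value may be swapped without violating (c).
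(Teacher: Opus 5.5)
Your proof is correct and follows the paper's structure: the unit-transfer ratio for (a)$\Rightarrow$(b), the closed interval $[\max_j m_j/p_j,\ \min_i(m_i+1)/p_i]$ for (b)$\Leftrightarrow$(c), and the threshold $1/\lambda$ separating selected from unselected quotients for (c)$\Leftrightarrow$(d). The only variation is in (b)$\Rightarrow$(a): you bound the total change in one step by summing the concavity estimates over the surplus set $I$ and the deficit set $J$ (read the max over $I$ and the min over $J$, where $m_j\ge1$), whereas the paper moves $m'$ to $m$ by unit transfers that never decrease $\log\Pr$; these are two equivalent forms of the same exchange argument.
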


	A transfer out of an empty bin is infeasible, but the inequality in \textup{(b)} is then automatic,
	so no separate convention is needed.  In apportionment language the divisor corresponding to
	\textup{(c)} is $D=1/\lambda$; since the $p_i$ are normalised probabilities, the multiplier
	$\lambda\approx N$ is the natural scale here.

\begin{proof}
	(a)$\Leftrightarrow$(b).  Since \eqref{eq:sep} is separable with each $g_i$ concave on
	$\Z_{\ge0}$, the maximiser over $\{\sum m_i=N\}$ is characterised by no single unit transfer
	increasing the value.  Here $e_i$ denotes the $i$-th unit coordinate vector.  Moving one unit
	from $j$ to $i$ multiplies \eqref{eq:pmf} by
	\[
		\frac{\Pr\{X=m-e_j+e_i\}}{\Pr\{X=m\}}=\frac{m_j}{m_i+1}\cdot\frac{p_i}{p_j},
	\]
	which is $\le1$ iff $m_jp_i\le(m_i+1)p_j$, i.e.\ $m_j/p_j\le(m_i+1)/p_i$.  This proves
	(a)$\Rightarrow$(b).  For the converse, let $m$ satisfy (b) and let $m'$ be any other point of the
	slice.  There are then $i,j$ with $m'_i>m_i$ and $m'_j<m_j$; transfer one unit of $m'$ from $i$
	to $j$.  By \eqref{eq:marg} the change in $\log\Pr$ is
	$\log\bigl(p_jm'_i\bigr)-\log\bigl(p_i(m'_j+1)\bigr)$, and since $m'_i\ge m_i+1$ and
	$m'_j+1\le m_j$ this is at least $\log\bigl(p_j(m_i+1)\bigr)-\log\bigl(p_im_j\bigr)\ge0$ by (b).
	Each such step is non-decreasing and reduces $\sum_i|m'_i-m_i|$ by two, so after finitely many
	steps $m'$ has been carried to $m$ without ever decreasing $\log\Pr$; hence
	$\Pr\{X=m\}\ge\Pr\{X=m'\}$, giving (a).  (This is the standard exchange argument for separable
	concave lattice maximisation; cf.\ \cite{ral2019}.)

	(b)$\Leftrightarrow$(c).  The inequalities in (b) say $\max_jm_j/p_j\le\min_i(m_i+1)/p_i$, which is
	exactly the statement that the closed interval $[\max_jm_j/p_j,\min_i(m_i+1)/p_i]$ is nonempty;
	any $\lambda$ in it satisfies $m_i\le\lambda p_i\le m_i+1$ for all $i$, and conversely.  Summing
	\eqref{eq:multiplier} over $i$ and using $\sum_i\lambda p_i=\lambda$, $\sum_im_i=N$ gives
	$N\le\lambda\le N+n$.  Note that only the \emph{closed} right-hand inequality is available: at
	$\lambda=(m_i+1)/p_i$ one has $\lfloor\lambda p_i\rfloor=m_i+1\neq m_i$, which is the content of
	Remark~\ref{rem:tie-warning}.

	(c)$\Leftrightarrow$(d) is the multiplier--quotient duality of apportionment: for
	a multiplier satisfying \textup{(c)}, every selected quotient $p_i/k$, $1\le k\le m_i$, is at
	least $1/\lambda$, and every unselected next quotient $p_i/(m_i+1)$ is at most $1/\lambda$.  Thus
	the selected quotients are $N$ largest quotients, with possible freedom only at the boundary.
	Conversely, any such selection gives a threshold separating selected from unselected quotients,
	and its reciprocal is an admissible multiplier in \textup{(c)}.
\end{proof}

\begin{proposition}[Uniqueness, ties, and the floor form]\label{prop:mode-ties-floor}
	Write $q_{(1)}\ge q_{(2)}\ge\cdots$ for the quotients $p_i/k$ in decreasing order.  For a mode
	$m$, the admissible multiplier interval in Theorem~\ref{thm:mode}\textup{(c)} is exactly
	$[\,1/q_{(N)},\,1/q_{(N+1)}\,]$.  The following are equivalent:
	\begin{enumerate}
	\item[\textup{(i)}] the mode is unique;
	\item[\textup{(ii)}] $q_{(N)}>q_{(N+1)}$;
	\item[\textup{(iii)}] the admissible multiplier interval has nonempty interior.
	\end{enumerate}
	In this case, and only in this case, one may pass to the floor form
	\begin{equation}\label{eq:floorform}
		m_i=\lfloor\lambda p_i\rfloor\ \ (1\le i\le n),\quad\sum_i\lfloor\lambda p_i\rfloor=N,
		\quad\text{for every }\lambda\in\bigl[1/q_{(N)},\,1/q_{(N+1)}\bigr),
	\end{equation}
	and, writing $\lambda=N+d$, to the \emph{inflate-then-floor} rule
	$m_i=\lfloor(N+d)p_i\rfloor$ with $d\in[0,n)$.
\end{proposition}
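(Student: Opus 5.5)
The plan is to work throughout with the greatest-divisors description, Theorem~\ref{thm:mode}(d), and translate each statement into a statement about the sorted quotient sequence $q_{(1)}\ge q_{(2)}\ge\cdots$. The first step is the interval identity. Let $m$ be a mode. By (d), the selected quotients $p_i/k$, $1\le k\le m_i$, are $N$ largest members of the multiset. Hence the smallest selected one, $\min_i p_i/m_i$ (taken over bins with $m_i\ge1$), equals $q_{(N)}$. Likewise the largest unselected one, $\max_i p_i/(m_i+1)$, equals $q_{(N+1)}$. This holds whatever tie-breaking was used, since the values at positions $N$ and $N+1$ of the sorted list do not depend on which equal-valued copies were chosen. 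Taking reciprocals gives $\max_j m_j/p_j=1/q_{(N)}$ and $\min_i(m_i+1)/p_i=1/q_{(N+1)}$. By Theorem~\ref{thm:mode}(c) the admissible interval is therefore exactly $[1/q_{(N)},1/q_{(N+1)}]$. For empty bins the term $m_j/p_j=0$ does not affect the maximum, because $N\ge1$ guarantees some bin is nonempty.

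For the equivalences, (ii)$\Leftrightarrow$(iii) is immediate from the interval identity, since the interval has nonempty interior iff $1/q_{(N)}<1/q_{(N+1)}$. For (ii)$\Rightarrow$(i): when $q_{(N)}>q_{(N+1)}$, the set of the $N$ largest quotients is uniquely determined as a sub-multiset of the indexed family $\{(i,k)\}$. The only freedom in (d) is at boundary ties, and no tie straddles positions $N$ and $N+1$. So every mode has the same counts $m_i$.

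For (i)$\Rightarrow$(ii) I argue by contraposition. If $q_{(N)}=q_{(N+1)}=q$, then some selected pair $(j,m_j)$ has $p_j/m_j=q$ and some unselected pair $(i,m_i+1)$ has $p_i/(m_i+1)=q$ with $i\ne j$. The case $i=j$ is impossible: it would need $p_j/m_j=p_j/(m_j+1)$, which fails because quotients within a bin are strictly decreasing. Swapping these two quotients gives the point $m-e_j+e_i$, which is another valid selection and hence a second mode. Equivalently, the transfer ratio in the proof of Theorem~\ref{thm:mode} equals $1$. So the mode is not unique.

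It remains to establish the floor form. Take $\lambda\in[1/q_{(N)},1/q_{(N+1)})$. Then $m_i\le\lambda p_i<m_i+1$ for all $i$: the left inequality is (c), and the right one is strict because $\lambda<1/q_{(N+1)}\le(m_i+1)/p_i$. Hence $\lfloor\lambda p_i\rfloor=m_i$, and summing gives $\sum_i\lfloor\lambda p_i\rfloor=N$. I read "only in this case" as follows: if $q_{(N)}=q_{(N+1)}$, any $\lambda$ satisfying $\sum_i\lfloor\lambda p_i\rfloor=N$ with floors equal to a mode would have to lie in $[1/q_{(N)},1/q_{(N+1)})$, which is empty. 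More directly, $\sum_i\lfloor\lambda p_i\rfloor$ counts the quotients $\ge1/\lambda$, and at a tie this count jumps from below $N$ to above $N$ in a single step, skipping $N$. For the inflate-then-floor rule, set $d=\lambda-N$. Summing $m_i\le\lambda p_i<m_i+1$ gives $N\le\lambda<N+n$, i.e.\ $d\in[0,n)$. I expect the main obstacle to be the bookkeeping with multiset ties in the interval identity, namely showing that the extreme selected and unselected values really are $q_{(N)}$ and $q_{(N+1)}$ regardless of tie resolution. I would handle it by working with indexed pairs $(i,k)$ rather than bare values.
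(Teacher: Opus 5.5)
Your proposal is correct and follows the paper's proof. You identify the interval endpoints with $q_{(N)}$ and $q_{(N+1)}$ via the greatest-divisors form (d), read (ii)$\Leftrightarrow$(iii) off the interval, and obtain the floor form and $d\in[0,n)$ by summing $m_i\le\lambda p_i<m_i+1$. Your explicit unit transfer with ratio $1$ for (i)$\Rightarrow$(ii), and your count showing that $\sum_i\lfloor\lambda p_i\rfloor$ skips $N$ at a tie, fill in steps the paper only asserts or defers to Remark~\ref{rem:tie-warning}.
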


\begin{proof}[Proof of Proposition~\ref{prop:mode-ties-floor}]
	By definition of $q_{(i)}$, for a mode $m$, the smallest selected quotient is $\min_{j:m_j\ge1}p_j/m_j=q_{(N)}$ and the
	largest unselected one is $\max_ip_i/(m_i+1)=q_{(N+1)}$.  Hence
	$\max_jm_j/p_j=1/q_{(N)}$ and $\min_i(m_i+1)/p_i=1/q_{(N+1)}$, so the admissible interval in
	Theorem~\ref{thm:mode}\textup{(c)} is $[1/q_{(N)},1/q_{(N+1)}]$.

	The mode is unique exactly when no tie occurs at the boundary, i.e.\ when $q_{(N)}>q_{(N+1)}$;
	this is also exactly the condition that the multiplier interval have nonempty interior.  If this
	holds and $\lambda\in[1/q_{(N)},1/q_{(N+1)})$, then
	$\lambda<(m_i+1)/p_i$ for every $i$, so $m_i\le\lambda p_i<m_i+1$ and
	$m_i=\lfloor\lambda p_i\rfloor$.  Summing gives $\sum_i\lfloor\lambda p_i\rfloor=N$.  With
	$\lambda=N+d$, the discarded fractional mass is $\sum_i(\lambda p_i-m_i)=d$; each summand lies in
	$[0,1)$ here, so $d\in[0,n)$.

\end{proof}

\begin{remark}[Why the floor form must be restricted]\label{rem:tie-warning}
	In the tied case $q_{(N)}=q_{(N+1)}$ the multiplier interval collapses to the single value
	$\lambda=1/q_{(N)}$.  Then $\lambda p_i\in\Z$ for each bin containing a boundary quotient, and
	$\sum_i\lfloor\lambda p_i\rfloor>N$: the floor form counts all boundary quotients, not only the
	selected ones.  The modes are exactly the allocations in Theorem~\ref{thm:mode}\textup{(d)}.  If
	$r$ quotients coincide at the boundary and $s$ of them must be selected, the number of co-modes is
	$\binom rs$ (see Remarks~\ref{rem:equalbins} and~\ref{rem:ties}).

	The restriction in \eqref{eq:floorform} is not a technicality.  Take $n=2$, $N=1$,
	$p_1=p_2=\tfrac12$.  Both $(1,0)$ and $(0,1)$ are modes, and for either of them
	$\max_jm_j/p_j=\min_i(m_i+1)/p_i=2$, so $\lambda=2$ is the only admissible multiplier; but
	$\lfloor2\cdot\tfrac12\rfloor=1$ in both coordinates, giving $(1,1)$ with sum $2\neq N$.  No
	$\lambda$ whatsoever reproduces a mode through floors here.  The implication
	$m_i/p_i\le\lambda\le(m_i+1)/p_i\Rightarrow m_i=\lfloor\lambda p_i\rfloor$, which is what
	\eqref{eq:floorform} would need, fails precisely at $\lambda=(m_i+1)/p_i$ --- and in a tie that is
	the only available value.  This is why \textup{(b)}, \textup{(c)} and \textup{(d)} are stated with
	closed inequalities and with ties resolved by selection, while the floor form is quarantined to
	the unique-mode case.
\end{remark}

\begin{corollary}[Binomial mode]\label{cor:binom}
	Let $n=2$.  The multiplier $\lambda=N+1$ is always admissible in the sense of
	\eqref{eq:multiplier}.  If $(N+1)p_1\notin\Z$ the mode is unique and
	\[
		m_1=\lfloor(N+1)p_1\rfloor,\qquad m_2=N-m_1 .
	\]
	If $(N+1)p_1\in\Z$ there are exactly two modes, $m_1=(N+1)p_1$ and $m_1=(N+1)p_1-1$.  This is the
	classical binomial statement, recovered as the $n=2$ instance of Theorem~\ref{thm:mode}; note that
	the floor form applies only in the first case, as \eqref{eq:floorform} requires.
\end{corollary}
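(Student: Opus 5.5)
The plan is to specialise Theorem~\ref{thm:mode}(c) and Proposition~\ref{prop:mode-ties-floor} to $n=2$, writing $p_2=1-p_1$ and parametrising modes by $m_1$ alone, with $m_2=N-m_1$. The first step is to show that $\lambda=N+1$ is admissible for some mode. Set $m_1=\lfloor (N+1)p_1\rfloor$ and $m_2=N-m_1$. Then $m_1\le(N+1)p_1<m_1+1$ holds by construction. For bin $2$ we have $(N+1)p_2=N+1-(N+1)p_1$, so that
\[
m_2=N-m_1\le N+1-(N+1)p_1\iff (N+1)p_1\le m_1+1 ,
\]
which is true. We also need $(N+1)p_2\le m_2+1=N-m_1+1$, which is equivalent to $m_1\le(N+1)p_1$, also true. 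Finally $0\le m_1\le N$, because $(N+1)p_1<N+1$. So \eqref{eq:multiplier} holds with $\lambda=N+1$, and by Theorem~\ref{thm:mode} this $m$ is a mode. Since the admissible interval is the same for every mode by Proposition~\ref{prop:mode-ties-floor}, $\lambda=N+1$ is admissible for all modes, and in particular the interval $[1/q_{(N)},1/q_{(N+1)}]$ always contains $N+1$.

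Next, uniqueness when $(N+1)p_1\notin\Z$. In that case $(N+1)p_2\notin\Z$ as well, so both inequalities in \eqref{eq:multiplier} are strict at $\lambda=N+1$. Hence $\lambda$ can be perturbed slightly in both directions, the admissible interval has nonempty interior, and Proposition~\ref{prop:mode-ties-floor}(iii)$\Rightarrow$(i) gives uniqueness. The floor form then follows directly from \eqref{eq:floorform} with $d=1$. A more hands-on alternative is to use the transfer ratio $\Pr\{m_1+1\}/\Pr\{m_1\}=\frac{(N-m_1)p_1}{(m_1+1)p_2}$. This ratio exceeds $1$ exactly when $m_1+1<(N+1)p_1$, which gives strict unimodality directly. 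I will keep this ratio as a check.

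For the tie case $(N+1)p_1=k\in\Z$, the ratio computation shows that $\Pr\{m_1=k\}=\Pr\{m_1=k-1\}$, with strict increase below $k-1$ and strict decrease above $k$. This gives exactly the two modes $k$ and $k-1$ claimed in the statement. Both lie in $[0,N]$ because $0<p_1<1$ forces $1\le k\le N$. In the language of the proposition, the interval collapses: with $m_1=k$ we get $\max_j m_j/p_j\ge k/p_1=N+1$, while $(m_2+1)/p_2=(N-k+1)/p_2=N+1$, so $\lambda=N+1$ is the only admissible value. This is consistent with Remark~\ref{rem:tie-warning}: $\lfloor(N+1)p_1\rfloor+\lfloor(N+1)p_2\rfloor=N+1$, so the floor form fails here, as the corollary notes.

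The only delicate point is the boundary bookkeeping: keeping $m_1\in[0,N]$ and handling the equality cases correctly in the tie. Apart from that, the argument is routine.
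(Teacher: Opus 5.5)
Your proposal is correct and follows the route the paper intends: the paper gives no separate proof and reads the corollary off Theorem~\ref{thm:mode}(c) and Proposition~\ref{prop:mode-ties-floor} at $n=2$ with $\lambda=N+1$, exactly as you do. Your ratio $\frac{(N-m_1)p_1}{(m_1+1)p_2}$ is the $n=2$ case of the transfer ratio in the proof of (a)$\Leftrightarrow$(b), and using it in the tie case is a clean way to get exactly the two co-modes $k-1$ and $k$.
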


\begin{remark}[The multiplier]\label{rem:waterlevel}
	The multiplier $\lambda$ has the usual divisor interpretation.  If units are assigned one at a
	time, the next unit in bin $i$ has marginal quotient $p_i/(k+1)$; choosing the $N$ largest such
	quotients is the highest-averages rule.  A threshold $1/\lambda$ between the $N$-th and
	$(N+1)$-th quotients gives $m_i=\lfloor\lambda p_i\rfloor$; at a tie the boundary quotients must
	be selected explicitly, giving the co-modes.  Since $\lambda=N+d$, the displacement from $Np_i$
	is proportional to $p_i$, which is the source of the large-bin bias quantified below.
\end{remark}

\begin{remark}[Provenance and contribution]\label{rem:mode-provenance}
	The mode characterisation in Theorem~\ref{thm:mode} is known in the apportionment literature:
	the max--min inequality, multiplier form, highest-averages form, and tie count $\binom rs$ all
	appear in Pukelsheim~\cite{pukelsheim}.  We include the proof because these equivalent forms are
	rarely stated together in probabilistic notation.

	In the probabilistic literature the multiplier form goes back to Finucan~\cite{finucan1964};
	related multiplier, quotient-ranking, and resource-allocation formulations appear in
	\cite{legall2003,white_hendy2010,ral2019}.  The useful point here is the identification with the
	Jefferson--D'Hondt rule and its consequence for localisation: the two-sided bound
	$|m_i-Np_i|<1$ quoted in \cite[Ch.~35]{jkb-dmd} is false for every $n\ge3$, and
	\eqref{eq:excess} with \eqref{eq:violation-criterion} is the correct replacement.
\end{remark}

\subsection{The mode is not a neighbour rounding of $Np$}

	A natural alternative would be to round each $Np_i$ to a neighbouring integer, choosing the
	coordinates so that the sum is $N$; this is the \emph{Hamilton / largest-remainder} apportionment,
	which is quota-satisfying.  The multinomial mode need not have this form.

\begin{proposition}[One-sided rounding; quota may be violated]\label{prop:quota}
	The mode satisfies
	\begin{equation}\label{eq:mode-bounds}
		\lfloor Np_i\rfloor\ \le\ m_i\ <\ Np_i+np_i\qquad(1\le i\le n).
	\end{equation}
	In particular $m_i$ is never below the floor $\lfloor Np_i\rfloor$; but $m_i$ may exceed the
	ceiling $\lceil Np_i\rceil$ for a large bin, so the mode need not be a rounding of $Np$ to
	neighbours. 
\end{proposition}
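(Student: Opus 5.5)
The plan is to read both bounds off the common-multiplier form in Theorem~\ref{thm:mode}(c), which holds for every mode (unique or tied), so that no appeal to the floor form of Proposition~\ref{prop:mode-ties-floor} is needed. Fix a mode $m$ and an admissible $\lambda$. Then $m_i\le\lambda p_i\le m_i+1$ for every $i$, and $\lambda\in[N,N+n]$.

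\emph{Lower bound.} From $\lambda p_i\le m_i+1$ and $\lambda\ge N$ we get $Np_i\le\lambda p_i\le m_i+1$, hence $m_i\ge Np_i-1$. This alone gives only $m_i\ge\lceil Np_i\rceil-1$, which equals $\lfloor Np_i\rfloor$ when $Np_i\notin\Z$. In the case $Np_i\in\Z$ I would argue directly.

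\begin{itemize}
\item Suppose $m_i=Np_i-1$. Then every inequality in the chain is an equality, so $\lambda=N$ and $\lambda p_i=m_i+1$.
\item Summing $m_j\le Np_j$ over $j$ gives $N=\sum_j m_j\le N-1+\sum_{j\ne i}Np_j<N$. This is a contradiction.
\end{itemize}

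Hence $m_i\ge\lfloor Np_i\rfloor$ in all cases. An equivalent cleaner route is to note that $\lambda=N$ forces $m_j\le Np_j$ for all $j$ with sum $N$, so $m_j=Np_j$ for every $j$.

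\emph{Upper bound.} Summing $\lambda p_j\le m_j+1$ over $j\ne i$ gives $\lambda(1-p_i)\le N-m_i+n-1$. Combining this with $m_i\le\lambda p_i$ would give $m_i\le\lambda-(N-m_i+n-1)+\dots$, which is circular. The better route is to use the lower bound just proved on the other bins.
\begin{itemize}
\item For $j\ne i$ we have $m_j\ge\lfloor Np_j\rfloor>Np_j-1$.
\item Therefore $m_i=N-\sum_{j\ne i}m_j<N-\sum_{j\ne i}(Np_j-1)=Np_i+(n-1)$.
\end{itemize}
That is weaker than the target $Np_i+np_i$. So the intended bound must come from the multiplier: $m_i\le\lambda p_i\le(N+n)p_i$, with strictness needed. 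Strictness fails only if $\lambda=N+n$ and $m_i=\lambda p_i$. But $\lambda=N+n$ forces equality $\lambda p_j=m_j+1$ for all $j$, since the summed inequality $\lambda\le N+n$ is then tight. That contradicts $m_i=\lambda p_i$, so $m_i<Np_i+np_i$.

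\emph{Ceiling claim.} The statement that the ceiling can be exceeded is existential; I would exhibit an example. Take $n=3$ with one dominant bin, say $p=(0.8,0.1,0.1)$ and small $N$, and compute the D'Hondt quotients via Theorem~\ref{thm:mode}(d) until $m_1>\lceil Np_1\rceil$ appears. The main obstacle is simply finding a concrete $N$, since a naive guess may land on a neighbour rounding. I would search with $p=(1-2\varepsilon,\varepsilon,\varepsilon)$ and $N$ chosen so that $N\varepsilon$ is just below $1$: the small bins get $0$ while $Np_1\approx N-2+$ fraction forces $m_1=N>\lceil Np_1\rceil$. For example, $\varepsilon=0.3/N$ with $N=10$: $Np_1=9.4$, $\lceil Np_1\rceil=10$ equals $m_1=10$, so this needs $N\varepsilon<1/2$-type tuning with three small bins, or larger $n$. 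I will instead use $n=4$, $p=(0.7,0.1,0.1,0.1)$, $N=9$, and verify the quotient ranking.
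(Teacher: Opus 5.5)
Your proof of the bounds \eqref{eq:mode-bounds} is correct and is essentially the paper's. Both bounds are read off the multiplier form of Theorem~\ref{thm:mode}(c): $\lambda=N$ forces $m_j=Np_j$ for every $j$ (lower bound), and $\lambda=N+n$ forces $\lambda p_j=m_j+1$ for every $j$ (strict upper bound). There are two small slips. First, the displayed sum in your integer case should read $\sum_jm_j\le Np_i-1+\sum_{j\ne i}Np_j=N-1$. Second, the abandoned bound $m_i<Np_i+(n-1)$ is not uniformly weaker than the target: it is sharper whenever $p_i>(n-1)/n$, and it is exactly the argument the paper uses for Proposition~\ref{prop:excess}.

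The genuine problem is the witness you finally commit to for the clause that $m_i$ may exceed $\lceil Np_i\rceil$: it does not work. For $n=4$, $\mathbf p=(0.7,0.1,0.1,0.1)$, $N=9$ you have $Np_1=6.3$ and $\lceil Np_1\rceil=7$. The quotients $0.7/k$ exceed $0.1$ only for $k\le6$, and the next four quotients all equal $0.1$ (namely $0.7/7$ and the first quotient of each small bin). So $q_{(9)}=q_{(10)}=0.1$, and the co-modes are $(7,1,1,0)$, $(7,1,0,1)$, $(7,0,1,1)$, $(6,1,1,1)$, none above the ceiling. Directly: any $m$ with $m_1\ge8$ leaves a small bin empty, and $8/0.7>1/0.1$ violates Theorem~\ref{thm:mode}(b).

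In your family $\mathbf p=(1-(n-1)\varepsilon,\varepsilon,\dots,\varepsilon)$ with $N\varepsilon=c$, two conditions are needed:
\begin{itemize}
\item $(n-1)c\ge1$, so that $\lceil Np_1\rceil\le N-1$. For $n=3$ this is $c\ge\tfrac12$, the opposite of your ``$N\varepsilon<1/2$'' tuning, which is why $c=0.3$ failed.
\item $p_1/N>\varepsilon$, i.e.\ $c<1-(n-1)c/N$, so that bin~$1$ takes all $N$ units. Your $n=4$ example violates this, since $0.7/9<0.1$.
\end{itemize}
A valid witness is $n=3$, $\mathbf p=(0.86,0.07,0.07)$, $N=10$. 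Here $Np_1=8.6$, and since $0.86/10=0.086$ exceeds both $0.07$ and $0.86/11$, the unique mode is $(10,0,0)$, with $m_1=10>9=\lceil Np_1\rceil$. Your first guess also works, at $N=15$: $\mathbf p=(0.8,0.1,0.1)$ gives $Np=(12,1.5,1.5)$ and the unique mode $(13,1,1)$.

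The paper does not prove this clause inside the proof either. It is supplied by Remark~\ref{rem:counterexample} and by the attainment construction of Proposition~\ref{prop:excess}, which is your family with $c\in[\tfrac{n-2}{n-1},1)$ and $N$ large.
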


\begin{proof}
	By Theorem~\ref{thm:mode}(c) there is $\lambda\in[N,N+n]$ with $m_i\le\lambda p_i\le m_i+1$ for
	every $i$; no floor form is needed, so the argument covers tied cases as well.  For the lower
	bound, suppose $m_i\le\lfloor Np_i\rfloor-1$.  Then $\lambda p_i\le m_i+1\le\lfloor Np_i\rfloor\le
	Np_i$, so $\lambda\le N$ and hence $\lambda=N$.  But summing $0\le\lambda p_j-m_j\le1$ over $j$
	gives $\sum_j(\lambda p_j-m_j)=\lambda-N=0$ with all summands non-negative, forcing
	$\lambda p_j=m_j$ for every $j$; in particular $m_i=Np_i$, contradicting
	$m_i\le\lfloor Np_i\rfloor-1$.  Hence $m_i\ge\lfloor Np_i\rfloor$, whether or not $Np_i\in\Z$.
	From the left inequality, $m_i\le\lambda p_i\le(N+n)p_i$.  Equality
	$m_i=(N+n)p_i$ would force both $\lambda=N+n$ and $\lambda p_i=m_i$; but summing
	$0\le\lambda p_j-m_j\le1$ over $j$ gives $\lambda-N=\sum_j(\lambda p_j-m_j)$, so $\lambda=N+n$
	forces $\lambda p_j=m_j+1$ for \emph{every} $j$, contradicting $\lambda p_i=m_i$.  Hence
	$m_i<Np_i+np_i$.
\end{proof}

\begin{remark}[A concrete quota violation]\label{rem:counterexample}
	For $n=5$, $N=57$, take
	\[
		\mathbf p=(0.130,0.116,0.501,0.190,0.063),\qquad m=(7,6,30,11,3).
	\]
	Here
	$Np_3=28.56$, so $\lceil Np_3\rceil=29$, yet $m_3=30>\lceil Np_3\rceil$ --- the large bin is
	rounded \emph{above} its ceiling --- while $Np_2=6.62$ is rounded \emph{down} to $m_2=6$.  This is
	the classical Jefferson bias in favour of large parties (Balinski and Young~\cite{balinski_young}).
	The excess can be larger in higher dimension: for
	\[
		n=8,\qquad \mathbf p=\bigl(\tfrac45,\tfrac1{35},\dots,\tfrac1{35}\bigr),\qquad N=86,
	\]
	the unique mode is $(72,2,\dots,2)$, so $m_1-\lceil Np_1\rceil=3$.  These examples satisfy the transfer
	inequalities of Theorem~\ref{thm:mode}.
\end{remark}

\begin{remark}[Which coordinates round up, when the mode \emph{is} a neighbour rounding]
\label{rem:roundup}
	Let
	\[
		h^{+}_i:=\lfloor Np_i\rfloor+1-Np_i=1-\{Np_i\}\in(0,1],
	\]
	\[
		d^\star:=N-\sum_i\lfloor Np_i\rfloor=\sum_i\{Np_i\}\in\{0,\dots,n-1\}.
	\]
	In the generic regime where the mode is unique and does round to neighbours, the
	inflate-then-floor rule \eqref{eq:floorform} inflates every $Np_i$ by the same multiple $d\,p_i$
	and floors; bin $i$ gains its first unit at inflation $d^{(1)}_i=h^{+}_i/p_i$.  Hence
	\emph{the $d^\star$ bins with the smallest $h^{+}_i/p_i$ round up, the rest stay at
	$\lfloor Np_i\rfloor$}: the rounding is decided by the distance up to the next integer, measured
	in units of $p_i$.  Outside this regime (small $Np_i$, or a dominant bin) the D'Hondt inequalities
	of Theorem~\ref{thm:mode}(b) are the exact and always-valid criterion.

	The quantity $h^{+}_i$ must not be replaced here by the ceiling defect
	$h_i=\lceil Np_i\rceil-Np_i$ of \cite{be_paper23}, which is used in that sense in
	\eqref{eq:violation-criterion}.  The two agree exactly when $Np_i\notin\Z$, but at an integer mean
	$h_i=0$ while $h^{+}_i=1$, and the rule then fails: with $h_i$ a bin whose mean is already an
	integer would be ranked first, although it cannot round up at all.  For instance $n=3$,
	$\mathbf p=(\tfrac3{28},\tfrac12,\tfrac{11}{28})$, $N=4$ has $Np=(\tfrac37,2,\tfrac{11}7)$ and
	$d^\star=1$; the ceiling defects give $h_i/p_i=(\tfrac{16}3,0,\tfrac{12}{11})$ and would select
	bin $2$, whereas the unique mode is $(0,2,2)$, in which bin $3$ rounds up --- as
	$h^{+}_i/p_i=(\tfrac{16}3,2,\tfrac{12}{11})$ correctly predicts.
\end{remark}

\begin{proposition}[Sharp bounds; the excess is at most $n-2$]\label{prop:excess}
	Let $n\ge2$.  The mode satisfies $\lfloor Np_i\rfloor\le m_i$ for every $i$, and
	\begin{equation}\label{eq:excess}
		m_i-\lceil Np_i\rceil\ \le\ n-2\qquad(1\le i\le n),
	\end{equation}
	the bound $n-2$ being attained.  A coordinate can be rounded \emph{above} its ceiling only if
	\begin{equation}\label{eq:violation-criterion}
		n\,p_i\ >\ 1+h_i\qquad(h_i=\lceil Np_i\rceil-Np_i),
	\end{equation}
	in particular only for an above-average bin, $p_i>1/n$.
\end{proposition}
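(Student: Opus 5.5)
The plan is to work throughout with the common multiplier of Theorem~\ref{thm:mode}(c), together with the two one-sided bounds already established in Proposition~\ref{prop:quota}. Neither needs a floor form, so tied modes are covered automatically. The lower bound $\lfloor Np_i\rfloor\le m_i$ is simply the left half of \eqref{eq:mode-bounds}, so nothing new is needed there.

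For the excess bound \eqref{eq:excess}, fix $i$ and use the constraint $\sum_jm_j=N$ together with the floor bound for the \emph{other} coordinates:
\[
	m_i=N-\sum_{j\ne i}m_j\le N-\sum_{j\ne i}\lfloor Np_j\rfloor
	=Np_i+\sum_{j\ne i}\{Np_j\}.
\]
There are $n-1$ terms in the last sum, each lying in $[0,1)$, so the sum is strictly less than $n-1$. Hence $m_i<Np_i+n-1\le\lceil Np_i\rceil+n-1$. Since $m_i$ and $\lceil Np_i\rceil$ are integers, this gives $m_i\le\lceil Np_i\rceil+n-2$. For the criterion \eqref{eq:violation-criterion}, suppose $m_i\ge\lceil Np_i\rceil+1=Np_i+h_i+1$. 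Combining this with the upper bound $m_i<Np_i+np_i$ from \eqref{eq:mode-bounds} gives $1+h_i<np_i$. Because $h_i\ge0$, it follows in particular that $p_i>1/n$.

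The main step is sharpness, i.e.\ exhibiting for every $n\ge2$ a unique mode with excess exactly $n-2$; note that the $n=8$ example of Remark~\ref{rem:counterexample} only reaches $3$. The construction I would use is to make every small bin sit just below an integer, so that each is floored and loses almost a full unit to the large bin. Concretely, fix $\delta\in(0,1/(n-1))$ and set $p_j=(1-\delta)/N$ for $j\ge2$, with $p_1=1-(n-1)(1-\delta)/N$. Then $Np_j=1-\delta$ for $j\ge2$, and the candidate mode is $m=(N,0,\dots,0)$. In this case $Np_1=N-(n-1)+(n-1)\delta$, and because $0<(n-1)\delta<1$ we get $\lceil Np_1\rceil=N-n+2$, so the excess is $m_1-\lceil Np_1\rceil=n-2$.

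It remains to check that $m$ is the unique mode. By Proposition~\ref{prop:mode-ties-floor}, it suffices to find an open interval of multipliers satisfying \eqref{eq:multiplier} strictly on the right. For $j\ge2$ this requires $\lambda<1/p_j=N/(1-\delta)$, and for bin $1$ it requires $\lambda\ge N/p_1$. The interval $[N/p_1,\,N/(1-\delta))$ therefore has nonempty interior if and only if $Np_1\delta>(n-1)(1-\delta)^2$. This holds once $N$ is large for fixed $\delta$, since $Np_1\sim N$. The right-hand condition for bin $1$, namely $\lambda p_1<N+1$, holds near $\lambda=N/p_1$. I expect this verification of the transfer inequalities, and in particular choosing the parameters so that the inequality is strict and the mode is unique, to be the only delicate point. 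Any residual tie could be broken by perturbing $\delta$.
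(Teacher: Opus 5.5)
Your proof is correct and follows the paper's argument closely. It uses the same estimate for \eqref{eq:excess} (the floor bound on the other coordinates plus the sum constraint) and the same extremal family (the paper's $c$ is your $1-\delta$); your explicit check that $[N/p_1,\,N/(1-\delta))$ has nonempty interior for large $N$ also makes the uniqueness of the extremal mode explicit, which the paper only asserts. The one difference is minor: for \eqref{eq:violation-criterion} you cite the already-proved strict bound $m_i<Np_i+np_i$ of Proposition~\ref{prop:quota} instead of repeating the $d<n$ versus $d=n$ case split, which is a legitimate shortcut.
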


\begin{proof}
	The lower bound $m_i\ge\lfloor Np_i\rfloor$ is Proposition~\ref{prop:quota}.  For
	\eqref{eq:violation-criterion} use Theorem~\ref{thm:mode}(c) in the form $m_i\le\lambda p_i$ with
	$\lambda=N+d$, $d\in[0,n]$: if $m_i\ge\lceil Np_i\rceil+1$ then
	$dp_i\ge m_i-Np_i\ge\lceil Np_i\rceil+1-Np_i=1+h_i$.  If $d<n$ this already gives $np_i>1+h_i$.
	If $d=n$ then, as in the proof of Proposition~\ref{prop:quota}, $\lambda p_j=m_j+1$ for every $j$,
	so $m_i=(N+n)p_i-1\ge\lceil Np_i\rceil+1$ gives $np_i\ge2+h_i>1+h_i$.  Either way
	$np_i>1+h_i$, whence $p_i>1/n$.  For \eqref{eq:excess}, use the sum constraint together with the lower bound for the other bins.  Since $m_j\ge\lfloor Np_j\rfloor>Np_j-1$ for $j\ne i$,
	\[
		m_i=N-\sum_{j\ne i}m_j\le N-\sum_{j\ne i}\lfloor Np_j\rfloor
		<N-\sum_{j\ne i}(Np_j-1)=Np_i+(n-1);
	\]
	as $m_i$ is an integer strictly below $Np_i+(n-1)$, it is at most $\lceil Np_i\rceil+n-2$, which is \eqref{eq:excess}.  The bound is attained.  For $n\ge3$ take $p_1=1-(n-1)\varepsilon$ and $p_2=\dots=p_n=\varepsilon$ with $\varepsilon=c/N$, where $c\in[\tfrac{n-2}{n-1},1)$ is fixed with $(n-1)c\notin\Z$, and $N$ large.  Then $Np_j=c<1$ for $j\ge2$, so the D'Hondt allocation of Theorem~\ref{thm:mode} gives $m_1=N$ and $m_j=0$; and
	$m_1-\lceil Np_1\rceil=N-\lceil N-N(n-1)\varepsilon\rceil=\lfloor(n-1)c\rfloor=n-2$.
	\end{proof}

\begin{remark}[Comparison with the standard localisation]\label{rem:jkb}
	Proposition~\ref{prop:excess} refines the two-sided localisation quoted in a standard reference.
	Johnson, Kotz and Balakrishnan \cite[Ch.~35 \S2]{jkb-dmd}, attributing it to Moran (see also
	Feller's Problem~28~\cite{feller}), state that the mode satisfies
	\[
		Np_i-1<m_i<Np_i+1\qquad(\text{i.e. } |m_i-Np_i|<1).
	\]
	The lower half is correct (it is our $m_i\ge\lfloor Np_i\rfloor$); the \emph{upper} half
	$m_i<Np_i+1$ is not valid in general for any $n\ge3$, and holds only for the binomial $n=2$.  The
	most convincing counterexample is the one of Remark~\ref{rem:counterexample}, in which no bin is
	nearly empty: $n=5$, $N=57$, $\mathbf p=(0.130,0.116,0.501,0.190,0.063)$ has every $Np_i\ge3.59$,
	yet $m_3-Np_3=30-28.557=1.443>1$.  Counterexamples exist already at the smallest possible $N$:
	for $n=4$, $N=2$ and $\mathbf p=\bigl(\tfrac{27}{60},\tfrac{11}{60},\tfrac{11}{60},
	\tfrac{11}{60}\bigr)$ the unique mode is $(2,0,0,0)$ with $m_1-Np_1=\tfrac{11}{10}>1$, and $N=2$
	is minimal, since for $N=1$ one always has $m_i-Np_i\le1$.  (For $n=3$ the least value is $N=3$,
	attained by $\mathbf p=(\tfrac7{11},\tfrac2{11},\tfrac2{11})$ with unique mode $(3,0,0)$ and
	$m_1-Np_1=\tfrac{12}{11}$: at $N=2$ the requirement that $(2,0,0)$ be a mode forces
	$p_j\le p_1/2$, i.e.\ $p_1\ge\tfrac12$, which is incompatible with $m_1-Np_1>1$.)  The
	corresponding sharp statement is Proposition~\ref{prop:excess}: the excess
	$m_i-\lceil Np_i\rceil$ can reach $n-2$.

	The correction concerns the bound as stated in \cite{jkb-dmd}.  Finucan's multiplier method
	\cite{finucan1964} is consistent with the example above and with Theorem~\ref{thm:mode}; the
	present point is to state the exact upper replacement, namely the excess bound $n-2$ and the
	criterion $np_i>1+h_i$, and to interpret it through the Jefferson large-party bias.
\end{remark}

\begin{remark}[Three notions of uniqueness]\label{rem:threeuniq}
	Because a mode is a labelled vector while a histogram shape is not, ``the mode is unique'' can mean
	three different things.  Let $\mathcal M$ denote the set of modes and
	$\mathrm{Sym}(\mathbf p)=\{\sigma:p_{\sigma(i)}=p_i\}$.
	\begin{enumerate}
	\item[\textup{(a)}] \emph{Labelled:} $|\mathcal M|=1$.  By
		Proposition~\ref{prop:mode-ties-floor} this holds iff $q_{(N)}>q_{(N+1)}$.
	\item[\textup{(b)}] \emph{Up to relabelling equal bins:} $\mathcal M$ is a single
		$\mathrm{Sym}(\mathbf p)$-orbit.
	\item[\textup{(c)}] \emph{As a histogram shape:} all modes have the same multiset of counts, i.e.\
		the same vector after sorting.
	\end{enumerate}
	Trivially (a)$\Rightarrow$(b)$\Rightarrow$(c), since permuting equal bins does not change the
	multiset.  The first implication is strict --- Remark~\ref{rem:equalbins} gives $21$ co-modes
	sharing one profile --- so a mode may be non-unique for the purely nominal reason that the units in
	excess are interchangeable among equal bins.  \textbf{But (c) is not automatic}, and it already
	fails for the binomial: for $\mathbf p=(\tfrac14,\tfrac34)$ and $N=3$ the two modes are
	\[
		(0,3)\quad\text{and}\quad(1,2),
	\]
	whose sorted profiles $\{0,3\}$ and $\{1,2\}$ are different.  Sorting therefore does \emph{not}
	restore uniqueness in general.  The reason is the dichotomy of Remark~\ref{rem:equalbins}:
	co-modes produced by a symmetry share a profile, while co-modes produced by a genuine coincidence
	$p_i/k=p_j/\ell$ between bins with $p_i\neq p_j$ do not.

	In fact (b) and (c) are equivalent; this is Proposition~\ref{prop:comodes} below, which also
	records the underlying structural fact that any two modes differ by at most one unit in every
	coordinate.
\end{remark}

\begin{proposition}[Structure of the set of co-modes]\label{prop:comodes}
	Let $m,m'$ be modes and let $\lambda$ be an admissible multiplier, common to both by
	Proposition~\ref{prop:mode-ties-floor}.  Then:
	\begin{enumerate}
	\item[\textup{(i)}] $|m_i-m'_i|\le1$ for every $i$; moreover $\lambda p_i=m_i+1$ on the set
		$U=\{i:m'_i=m_i+1\}$ and $\lambda p_i=m_i$ on $D=\{i:m'_i=m_i-1\}$;
	\item[\textup{(ii)}] if in addition $m$ and $m'$ have the same multiset of counts, then $m'=m\circ
		\sigma$ for some $\sigma\in\mathrm{Sym}(\mathbf p)$.
	\end{enumerate}
	In particular conditions \textup{(b)} and \textup{(c)} of Remark~\ref{rem:threeuniq} are
	equivalent.
\end{proposition}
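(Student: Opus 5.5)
Both modes admit every multiplier in the interval $[1/q_{(N)},1/q_{(N+1)}]$ of Proposition~\ref{prop:mode-ties-floor}, since that interval depends only on the ordered quotients and not on which mode is chosen. So we fix one $\lambda$ satisfying \eqref{eq:multiplier} for both $m$ and $m'$ simultaneously.

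For (i), write $m_i\le\lambda p_i\le m_i+1$ and $m'_i\le\lambda p_i\le m'_i+1$. Both $m_i$ and $m'_i$ lie in $[\lambda p_i-1,\lambda p_i]$, an interval of length $1$, so the two integers differ by at most one. If $m'_i=m_i+1$, then $m_i+1=m'_i\le\lambda p_i\le m_i+1$, which forces $\lambda p_i=m_i+1$. If $m'_i=m_i-1$, then $m_i=m'_i+1\ge\lambda p_i\ge m_i$, which forces $\lambda p_i=m_i$. This is all of (i).

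For (ii), the key observation is that the value $m_i$, together with the index set it belongs to, determines $p_i$. On $U$ we have $p_i=(m_i+1)/\lambda$, on $D$ we have $p_i=m_i/\lambda$, and $m'_i=m_i$ elsewhere. The plan is to build the permutation count-value by count-value. For each integer $c$, consider the bins with $m_i=c$ and the bins with $m'_i=c$. Equal multisets mean these two sets have the same size, so there is a bijection $\sigma$ with $m'_{\sigma(i)}\cdot$ relation $m'=m\circ\sigma$. The real task is to choose $\sigma$ so that it preserves $p$.

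Decompose $\{i:m'_i=c\}$ into three parts:
\begin{itemize}
\item bins in neither $U$ nor $D$ with $m_i=c$;
\item bins in $U$ with $m_i=c-1$, which have $p_i=c/\lambda$;
\item bins in $D$ with $m_i=c+1$, which have $p_i=(c+1)/\lambda$.
\end{itemize}
Likewise, $\{i:m_i=c\}$ splits into bins outside $U\cup D$, bins of $U$ (with $p_i=(c+1)/\lambda$), and bins of $D$ (with $p_i=c/\lambda$).

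The bins outside $U\cup D$ appear on both sides, so $\sigma$ can be the identity there. It remains to match the $U$ and $D$ bins. Let $u_c=|\{i\in U:m_i=c\}|$ and $d_c=|\{i\in D:m_i=c\}|$. Equality of multiplicities at level $c$ reads
\[
u_{c-1}+d_{c+1}=u_c+d_c .
\]
The side with value $c$ for $m'$ offers $u_{c-1}$ bins with probability $c/\lambda$ and $d_{c+1}$ bins with probability $(c+1)/\lambda$. The $m$ side offers $d_c$ bins with $p=c/\lambda$ and $u_c$ bins with $p=(c+1)/\lambda$. A $p$-preserving bijection exists if $u_{c-1}=d_c$ and $d_{c+1}=u_c$ for every $c$.

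\textbf{Main obstacle.} The balance equation only gives the sums, not the individual equalities $u_{c-1}=d_c$. I would attack this by induction on $c$ from the smallest count upward. At the minimal level where any of these terms is nonzero, $u_{c-1}=0$ and $d_{c+1}$ is constrained by the lower levels, so the balance forces the equalities to start correctly. They then propagate: once $u_{c-1}=d_c$ holds, the balance gives $d_{c+1}=u_c$, which is the next equality $u_{(c+1)-1}=d_{c+1}$. The induction therefore closes with base case $u_{c_0-1}=0=d_{c_0}$ below the smallest count.

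Finally, the equivalence (b)$\Leftrightarrow$(c) in Remark~\ref{rem:threeuniq} follows from (ii) applied to every pair of modes, together with the trivial implication already noted there.
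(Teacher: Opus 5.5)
Your proof is correct and follows essentially the paper's route: (i) is identical, and in (ii) you use the same key fact that $p_i$ is determined by $\max(m_i,m'_i)$ on $U\cup D$, which reduces the claim to showing that these maxima form the same multiset over $U$ as over $D$. Your level balance $u_{c-1}+d_{c+1}=u_c+d_c$, with the upward induction from below the smallest count, is the coefficientwise form of the paper's generating-polynomial identity and its cancellation of $(x-1)$; equivalently, $u_{c-1}-d_c$ is constant in $c$ and vanishes for small $c$, and your level-by-level $p$-preserving bijection plays the role of the paper's product of disjoint transpositions.
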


\begin{proof}
	(i)  By Proposition~\ref{prop:mode-ties-floor} the admissible interval
	$[1/q_{(N)},1/q_{(N+1)}]$ depends only on $\mathbf p$ and $N$, not on which mode is taken, so a
	single $\lambda$ serves both: $m_i\le\lambda p_i\le m_i+1$ and $m'_i\le\lambda p_i\le m'_i+1$ for
	all $i$.  Subtracting, $|m_i-m'_i|\le1$.  If $m'_i=m_i+1$ then $m_i+1=m'_i\le\lambda p_i\le
	m_i+1$, so $\lambda p_i=m_i+1$; the case $m'_i=m_i-1$ is symmetric.

	(ii)  By (i), $m$ and $m'$ differ only on $U\cup D$, where $\lambda p_i=m_i+1=\max(m_i,m'_i)$ for
	$i\in U$ and $\lambda p_i=m_i=\max(m_i,m'_i)$ for $i\in D$.  Writing $M_i=\max(m_i,m'_i)$ we thus
	have $p_i=M_i/\lambda$ on $U\cup D$: \emph{the probability of a bin is determined by that
	maximum}.  Equality of the multisets of counts, after cancelling the coordinates outside
	$U\cup D$, reads
	\[
		\sum_{i\in U}x^{M_i-1}+\sum_{j\in D}x^{M_j}
		=\sum_{i\in U}x^{M_i}+\sum_{j\in D}x^{M_j-1},
	\]
	i.e.\ $(x-1)\sum_{i\in U}x^{M_i-1}=(x-1)\sum_{j\in D}x^{M_j-1}$, whence the multisets
	$\{M_i\}_{i\in U}$ and $\{M_j\}_{j\in D}$ coincide.  Matching them value by value pairs each
	$i\in U$ with a $j\in D$ having $M_i=M_j$, hence $p_i=p_j$; the product of the corresponding
	transpositions lies in $\mathrm{Sym}(\mathbf p)$ and carries $m$ to $m'$.
\end{proof}

\begin{remark}[Equal bins: what ``unique up to permutation'' does and does not mean]\label{rem:equalbins}
	The mass \eqref{eq:pmf} is invariant under any permutation of bins carrying equal probabilities,
	so the mode \emph{set} is invariant under the group
	$\mathrm{Sym}(\mathbf p):=\{\sigma:\ p_{\sigma(i)}=p_i\ \forall i\}$.  Two consequences must be
	separated, one positive and one negative.

	\emph{(i) Equal bins are balanced.}  If $p_i=p_j$ then
	\[
		|m_i-m_j|\le1\qquad\text{at every mode.}
	\]
	This is immediate from the transfer inequalities: Theorem~\ref{thm:mode}(b) with the pair $(i,j)$
	gives $m_i/p_i\le(m_j+1)/p_j$, i.e.\ $m_i\le m_j+1$, and with $(j,i)$ gives $m_j\le m_i+1$.  So
	bins with equal probabilities can differ by at most one unit, and when they do, the two
	allocations are exchanged by $\mathrm{Sym}(\mathbf p)$.  (Proposition~\ref{prop:comodes}(i) gives
	the stronger statement that \emph{any} two modes, equal probabilities or not, differ by at most
	one unit in each coordinate.)

	It should be stressed that the bound \emph{permits} some of several equal bins to receive one unit
	more than the others, and that this is the usual way large tie multiplicities arise.  Take $n=8$,
	\[
		\mathbf p=\Bigl(\tfrac{11}{25},\ \tfrac2{25},\dots,\tfrac2{25}\Bigr),\qquad N=32 .
	\]
	The threshold falls exactly on the quotient $q/3=\tfrac2{75}$, where $q=\tfrac2{25}$: strictly
	above it lie $16$ quotients of the large bin and $2$ of each small bin, thirty in all, so the two
	remaining units must be taken from the \emph{seven} equal copies of $q/3$.  Accordingly
	\[
		m=(16,3,3,2,2,2,2,2)
	\]
	is a mode --- here $\max_jm_j/p_j=\min_i(m_i+1)/p_i=\tfrac{75}2$, a degenerate interval --- and so
	is every allocation obtained by choosing which \emph{two} of the seven equal bins carry the extra
	unit.  That is $\binom72=21$ co-modes.  By contrast $(16,4,2,2,2,2,2,2)$, in which two equal bins
	would differ by $2$, is not a mode, as \emph{(i)} requires.  So the multiplicity at a tie is in
	general a binomial coefficient, not merely two or three.

	\emph{(ii) But the mode set need not be a single orbit.}  Co-modes arise from two independent
	mechanisms, and only the first is a symmetry: equality $p_i=p_j$ of two probabilities, and a
	genuine arithmetic coincidence $p_i/k=p_j/\ell$ between quotients of bins with $p_i\neq p_j$.  The
	example of Remark~\ref{rem:ties} has both.  For $\mathbf p\propto(6,4,6)$, $N=14$ the co-modes are
	\[
		(5,3,6),\qquad (6,3,5),\qquad (5,4,5):
	\]
	the first two form the $\mathrm{Sym}(\mathbf p)$-orbit obtained by exchanging the two equal bins,
	while the third is a permutation of neither.  The boundary quotient here is
	$q_{(14)}=q_{(15)}=\tfrac1{16}$, attained three times --- by bins $1$ and $3$ at $k=6$ and by bin
	$2$ at $k=4$, since $\tfrac6{16}/6=\tfrac4{16}/4$ --- so $r=3$, $s=1$ and there are
	$\binom31=3$ co-modes.  Conversely, ties are not caused by symmetry at all: with all $p_i$
	distinct, $\mathbf p=(\tfrac23,\tfrac13)$ and $N=2$ has the two modes $(1,1)$ and $(2,0)$, and
	$\mathrm{Sym}(\mathbf p)$ is trivial.

	Hence the mode is unique up to permutation of equal bins precisely when the only coincidence among
	the boundary quotients is the symmetry-forced one; the criterion for outright uniqueness remains
	$q_{(N)}>q_{(N+1)}$ of Proposition~\ref{prop:mode-ties-floor}.  Sampling $\mathbf p$ by drawing
	numerators uniformly from $\{1,\dots,6\}$ and normalising, over $4000$ instances with $n\le4$,
	$N\le14$, there were $908$ in which the mode set is \emph{not} one
	$\mathrm{Sym}(\mathbf p)$-orbit --- so this is the typical rather than the exceptional situation.
\end{remark}

\begin{remark}[Ties are common, and multiple]\label{rem:ties}
	A tie (equality of the $N$-th and $(N+1)$-th largest quotients $p_i/k$) requires
	$p_i/k=p_j/\ell$, hence rational $p$; then it is frequent.  Sampling $\mathbf p$ by drawing
	numerators uniformly from $\{1,\dots,6\}$ and normalising, over $40000$ instances with $n\le4$,
	$N\le15$, some $35\%$ were multimodal and
	$17\%$ had $\ge3$ co-modes; e.g.\ $\mathbf p\propto(6,4,6)$, $N=14$ has the three co-modes
	$(5,3,6),(5,4,5),(6,3,5)$.  For irrational (or generic real) $\mathbf p$ the mode is a.s.\ unique.
\end{remark}

\section{The complete local expansion}\label{sec:expansion}

	We expand \eqref{eq:pmf} for bounded displacements $t_i=m_i-Np_i$, using the gamma-quotient
	scheme of \cite{be043,be047}.

\subsection{The building block}

	We use the expansion of a single gamma factor from \cite{be043} (there attributed to \cite{be042}): for any fixed integer $M\ge1$,
	\begin{equation}\label{eq:gammablock}
		\begin{gathered}
		\Gamma(y+a)\sim\sqrt{2\pi}\Bigl(\frac ye\Bigr)^{y}y^{\,a-1/2}
			\Bigl(\sum_{k\ge0}P_k(a)\,y^{-k}\Bigr)^{1/M},\\
		P_0=1,\qquad
		P_k(a)=\frac Mk\sum_{j=1}^k\frac{(-1)^{j+1}B_{j+1}(a)}{j+1}P_{k-j}(a).
		\end{gathered}
	\end{equation}
	together with the two elementary series lemmas of \cite[\S3]{be043}: if $f\sim\sum a_kx^{-k}$,
	$g\sim\sum b_kx^{-k}$ ($a_0,b_0\neq0$), then
	\begin{align}
		\frac fg&\sim\sum c_kx^{-k},& c_k&=\frac1{b_0}\Bigl(a_k-\sum_{j=1}^kb_jc_{k-j}\Bigr),
		\label{eq:lemq}\\
		f^{\,p}&\sim\sum c_kx^{-k},& c_k&=\frac1{k\,a_0}\sum_{j=1}^k\bigl[(p+1)j-k\bigr]a_jc_{k-j},
		\quad c_0=a_0^{\,p}.
		\label{eq:lemp}
	\end{align}

\subsection{Assembling the multinomial pmf}

	Take the large variable $x=N$.  The numerator $\Gamma(N+1)=\Gamma(x+1)$ has scaling $1$ and shift
	$a=1$; each denominator $\Gamma(m_i+1)=\Gamma(p_ix+(t_i+1))$ has scaling $p_i$ and shift
	$a_i:=t_i+1$, with large variable $y_i=p_ix$ so that $y_i^{-k}=p_i^{-k}x^{-k}$.  Substituting
	\eqref{eq:gammablock} into \eqref{eq:pmf}, the explicit (non-series) factors collapse by
	$\sum p_i=1$, $\sum t_i=0$ and $\sum a_i=n$ to the multinomial local-limit prefactor:
	\begin{equation}\label{eq:prefactor}
	\begin{gathered}
		\Pr\{X=m\}\sim\frac{1}{(2\pi N)^{(n-1)/2}(p_1\cdots p_n)^{1/2}}\;R(N),\\
		R(N):=\frac{\bigl(\sum_kP_k(1)N^{-k}\bigr)^{1/M}}
		{\prod_{i=1}^n\bigl(\sum_kP_k(a_i)p_i^{-k}N^{-k}\bigr)^{1/M}} .
	\end{gathered}
	\end{equation}
	The series factor $R(N)=\sum_{j\ge0}Q_j\,N^{-j}$, $Q_0=1$, is a quotient of a product of $(\,\cdot\,)^{1/M}$ series and is computed by \eqref{eq:lemp} (to remove the powers $1/M$) followed by \eqref{eq:lemq} (one division per denominator).  The parameter $M$ is free --- larger $M$ improves numerical accuracy \cite{be043} --- and cancels from the final $Q_j$; the natural analytic choice is $M=1$.

\subsection{Closed form of the coefficients}

\begin{theorem}[Complete local expansion]\label{thm:expansion}
	Fix $n$, a compact $K$ in the open simplex $\{p:\ p_i>0,\ \sum_ip_i=1\}$, and a bound $B$.
	Then, for every fixed $r\ge1$, as $N\to\infty$, uniformly for $\mathbf p\in K$ and for integer
	points $m$ with $\sum_im_i=N$ and $|t_i|=|m_i-Np_i|\le B$,
	\begin{equation}\label{eq:main-expansion}
		\Pr\{X=m\}=\frac{1}{(2\pi N)^{(n-1)/2}(p_1\cdots p_n)^{1/2}}
		\Bigl(\sum_{j=0}^{r-1}\frac{Q_j}{N^{j}}+O(N^{-r})\Bigr),
	\end{equation}
	with $Q_0=1$ and $Q_j=Y_j(1!\,c_1,2!\,c_2,\dots,j!\,c_j)/j!$ the complete Bell polynomials in
	$c_1,\dots,c_j$; the coefficients $c_k$ are given in closed form by \eqref{eq:logR} below, and the
	implied constant depends only on $n$, $K$, $B$ and $r$.
\end{theorem}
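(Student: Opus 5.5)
The plan is to work with $\log R(N)$ directly, taking $M=1$, and to use the classical Stirling series with Bernoulli polynomials rather than the recursion \eqref{eq:gammablock}. For fixed $a$ and $y\to\infty$ the expansion
\[
\log\Gamma(y+a)=\bigl(y+a-\tfrac12\bigr)\log y-y+\tfrac12\log(2\pi)+\sum_{k=1}^{L}\frac{(-1)^{k+1}B_{k+1}(a)}{k(k+1)\,y^{k}}+O(y^{-L-1})
\]
holds. Its remainder is uniform for $a$ in a compact set, and $y\ge y_0>0$ large. This is the $M=1$, logarithmic form of \eqref{eq:gammablock}, and the recursion there is just exponentiation of this series.

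First I apply the series to $\Gamma(N+1)$ with $y=N$, $a=1$. Then I apply it to each $\Gamma(m_i+1)$ with $y_i=p_iN$ and $a_i=t_i+1$. Since $p\in K$, we have $p_i\ge\delta>0$, so $y_i\ge\delta N$. Since $|t_i|\le B$, the shifts satisfy $a_i\in[1-B,1+B]$. Hence all remainders are $O(N^{-L-1})$ uniformly, with constants depending only on $n,K,B,L$.

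Next I check that the non-series parts combine to give the prefactor in \eqref{eq:prefactor}. With $\sum_i p_i=1$, $\sum_i t_i=0$ and $\sum_i a_i=n$, the linear terms $-N+\sum_i p_iN$ cancel. The $\log N$ terms give $(N+\tfrac12)-\sum_i(p_iN+t_i+\tfrac12)=\tfrac{1-n}2$. The remaining pieces are $-\sum_i(p_iN+t_i+\tfrac12)\log p_i$, which against $\sum_i m_i\log p_i$ leaves $-\tfrac12\sum_i\log p_i$, and $(1-n)\tfrac12\log 2\pi$. Together these reproduce $(2\pi N)^{-(n-1)/2}(p_1\cdots p_n)^{-1/2}$.

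Then I collect the series parts. Using $y_i^{-k}=p_i^{-k}N^{-k}$ and $B_{k+1}(1)=B_{k+1}$ for $k\ge1$, I get
\[
\log R(N)=\sum_{k=1}^{L}\frac{c_k}{N^k}+O(N^{-L-1}),\qquad c_k=\frac{(-1)^{k+1}}{k(k+1)}\Bigl[B_{k+1}-\sum_i p_i^{-k}B_{k+1}(t_i+1)\Bigr].
\]
This is the closed form announced in the introduction, and it is the formula \eqref{eq:logR} I would record. The $c_k$ are bounded uniformly on the parameter range, because the Bernoulli polynomials are continuous on the compact set of $a_i$ and $p_i^{-k}\le\delta^{-k}$.

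Finally I exponentiate. Write $S=\sum_{k\le L}c_kN^{-k}$, which is $O(N^{-1})$ uniformly. Then $e^{S+O(N^{-L-1})}=e^{S}(1+O(N^{-L-1}))$. Expanding $e^{S}$ as a power series and truncating at order $N^{-(r-1)}$ with $L=r$ yields $\sum_{j<r}Q_jN^{-j}+O(N^{-r})$. By the exponential formula, $Q_j=Y_j(1!c_1,\dots,j!c_j)/j!$ as stated, since $\exp(\sum_k c_kx^k)=\sum_j Y_j(1!c_1,\dots)x^j/j!$.

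The independence from $M$ claimed after \eqref{eq:prefactor} follows because $M$ drops out of the logarithm. The main obstacle is the uniformity of the Stirling remainder in the shift $a$, including negative shifts when $B\ge1$. I would handle it by quoting the Bernoulli-polynomial form of Binet's or Hermite's remainder: for $\operatorname{Re}(y+a)$ bounded below away from $0$, it is bounded by a constant times the first omitted term, uniformly in $a$ on compacts. The fallback is to reduce to $a\in[0,1)$ via $\Gamma(z+1)=z\Gamma(z)$, expand the finitely many factors $\log(1+j/y)$, and reabsorb them using $B_{k}(a+1)-B_k(a)=ka^{k-1}$.
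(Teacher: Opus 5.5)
Your proposal is correct and follows essentially the same route as the paper. Both apply the logarithmic Stirling series with Bernoulli-polynomial coefficients to $\Gamma(N+1)$ and to each $\Gamma(p_iN+t_i+1)$, get uniformity from $a_i\in[1-B,1+B]$ and $p_iN\ge N\min_K p_i$, cancel the elementary terms into the Gaussian prefactor, and exponentiate via complete Bell polynomials; the only difference is that you spell out the uniformity of the Stirling remainder in the shift (via Hermite's remainder or the reduction $\Gamma(z+1)=z\Gamma(z)$), which the paper simply delegates to \cite{be043}.
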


\begin{proof}
	The expansion is meant in the usual Poincar\'e sense, and the two displayed forms are related in the
	standard way: one truncates the logarithm after $r$ terms and exponentiates, so that
	\eqref{eq:main-expansion} carries a \emph{relative} error $O(N^{-r})$ against the Gaussian prefactor.
	The uniformity asserted is over all admissible data simultaneously: for fixed $n,K,B,r$ the implied
	constant is independent of $N$, of $\mathbf p\in K$, and of the integer point $m$, as $m$ ranges over
	\emph{all} lattice points with $\sum_im_i=N$ and $\max_i|m_i-Np_i|\le B$.  Three remarks make the
	delegation to \eqref{eq:gammablock} explicit.  First, the number of gamma factors is fixed at $n+1$,
	so no uniformity in $n$ is claimed and finitely many expansions are multiplied.  Second, the shifts
	entering those factors are $a_i=t_i+1$ with $|t_i|\le B$, hence confined to the fixed compact
	$[1-B,1+B]$, which is exactly the hypothesis under which \eqref{eq:gammablock} is uniform.  Third,
	the scalings are $y_i=p_iN$ with $p_i\in K$, so $y_i\ge N\min_{K}p_i\to\infty$ uniformly, and the
	$O(y_i^{-r})$ remainders are $O(N^{-r})$ with a constant depending only on $K$ and $r$.  Multiplying
	finitely many such expansions and exponentiating preserves this, which is what
	\eqref{eq:main-expansion} asserts.  The rest of this section derives \eqref{eq:main-expansion} and the
	closed form of the $c_k$; the building-block expansion \eqref{eq:gammablock} itself is
	\cite{be043}.

	Each gamma factor has the shifted Stirling expansion
	\[
		\log\Gamma(y+a)\sim\Bigl(y+a-\tfrac12\Bigr)\log y-y+\tfrac12\log2\pi
		+\sum_{k\ge1}\frac{(-1)^{k+1}B_{k+1}(a)}{k(k+1)}\,y^{-k},
	\]
	uniformly for $a$ in bounded sets \cite{be043}.  Applying it to $\Gamma(N+1)$ ($y=N$, $a=1$) and
	to each $\Gamma(m_i+1)=\Gamma(p_iN+a_i)$ ($y=p_iN$, $a_i=t_i+1$), the elementary terms cancel by
	$\sum_ip_i=1$, $\sum_it_i=0$ and $\sum_ia_i=n$ --- they reproduce the prefactor
	\eqref{eq:prefactor} --- and the power-sum tails combine, so that the log of $R$ is a single power
	sum:
	\begin{align}\label{eq:logR}
		\log R(N)&=\sum_{k\ge1}\frac{c_k}{N^k},\notag\\
		c_k&=\frac{(-1)^{k+1}}{k(k+1)}
		\Bigl[B_{k+1}(1)-\sum_{i=1}^n p_i^{-k}B_{k+1}(a_i)\Bigr]\notag\\
		&=\frac{(-1)^{k+1}}{k(k+1)}
		\Bigl[B_{k+1}-\sum_{i=1}^n p_i^{-k}B_{k+1}(t_i+1)\Bigr].
	\end{align}
	Here we used $B_{k+1}(1)=B_{k+1}$ for $k\ge1$.  Introducing the power sums
	\begin{equation}\label{eq:Sk}
		S_{k+1}(t;\mathbf p):=\sum_{i=1}^n p_i^{-k}B_{k+1}(t_i+1),\qquad
		c_k=\frac{(-1)^{k+1}}{k(k+1)}\bigl[B_{k+1}-S_{k+1}\bigr],
	\end{equation}
	where the index shift is intentional: $S_{k+1}$ carries the weight $p_i^{-k}$, not
	$p_i^{-(k+1)}$.  Thus the whole expansion has logarithmic form
	\[
		\Pr\{X=m\}=(2\pi N)^{-(n-1)/2}(\prod p_i)^{-1/2}
		\exp\Bigl(\sum_{k\ge1}c_kN^{-k}\Bigr),
	\]
	and the multiplicative coefficients $Q_j$ are the complete Bell polynomials in $c_1,\dots,c_j$,
	normalised as in Theorem~\ref{thm:expansion}.  The first coefficient is
	\begin{equation}\label{eq:c1}
		c_1=\frac1{12}-\sum_i\frac{B_2(t_i+1)}{2p_i}
		=\frac1{12}\Bigl(1-\sum_i\frac1{p_i}\Bigr)-\sum_i\frac{t_i^2+t_i}{2p_i} .
	\end{equation}
	This is the multivariate instance of the gamma-quotient recursion of \cite{be047} (Theorem~A there),
	with one numerator factor of scaling $1$ and $n$ denominator factors of real scalings $p_i$; the
	unequal scalings enter only as the weights $p_i^{-k}$.
\end{proof}

\begin{remark}[The range of validity, and the comparison with Ouimet]\label{rem:range}
	Theorem~\ref{thm:expansion} holds for $|t_i|\le B$ with $B$ fixed.  Ouimet's local limit theorem
	\cite{ouimet2021} is uniform over a range of $m$ growing with $N$, so the two results are not
	comparable: the present theorem gives all orders in a fixed local window, whereas Ouimet's theorem
	works in the statistical, $\sqrt N$-scale regime.  The formal expression
	$\exp(\sum_{k\ge1}c_kN^{-k})$ is only an abbreviation for \eqref{eq:main-expansion}, since
	$c_k=c_k(t,\mathbf p)$ may depend on $N$ through the lattice point $m$; on the stated domain the
	coefficients remain uniformly bounded.
\end{remark}

	The first six log-coefficients, using $B_2=\tfrac16$, $B_3=B_5=B_7=0$, $B_4=-\tfrac1{30}$,
	$B_6=\tfrac1{42}$:
	\begin{gather}
	\label{eq:cklist}
		c_1=\tfrac1{12}-\tfrac12S_2,\quad c_2=\tfrac16S_3,\quad c_3=-\tfrac1{360}-\tfrac1{12}S_4,\notag\\
		c_4=\tfrac1{20}S_5,\quad c_5=\tfrac1{1260}-\tfrac1{30}S_6,\quad c_6=\tfrac1{42}S_7 ,
	\end{gather}
	so for even $k$, $c_k=\tfrac{(-1)^k}{k(k+1)}S_{k+1}$ (the constant $B_{k+1}$ vanishes).  The
	multiplicative coefficients (from $\exp\sum c_kN^{-k}$) are
	\begin{gather}\label{eq:Qjlist}
		Q_1=\tfrac1{12}-\tfrac12S_2,\qquad
		Q_2=\tfrac18S_2^2-\tfrac1{24}S_2+\tfrac16S_3+\tfrac1{288},\notag\\
		Q_3=-\tfrac1{48}S_2^3+\tfrac1{96}S_2^2-\tfrac1{12}S_2S_3-\tfrac1{576}S_2
		+\tfrac1{72}S_3-\tfrac1{12}S_4-\tfrac{139}{51840},
	\end{gather}
	and $Q_4,Q_5,Q_6$ likewise (computed symbolically).  The $S$-independent constants
	\[
		\tfrac1{288},\quad -\tfrac{139}{51840},\quad
		-\tfrac{571}{2488320},\quad \tfrac{163879}{209018880},\dots
	\]
	are exactly the Stirling--Laplace coefficients: they are the contribution of the numerator
	$\Gamma(N+1)$'s own Stirling series, a built-in consistency check.

	Throughout, $B_m=B_m(0)$ denotes the Bernoulli numbers in the classical convention
	$B_1=-\tfrac12$, so that $B_m(1)=B_m$ for $m\ne1$ while $B_1(1)=+\tfrac12$.  Expanding each
	Bernoulli polynomial by the Appell rule $B_{k+1}(t+1)=\sum_j\binom{k+1}{j}B_{k+1-j}(1)\,t^{\,j}$,
	equivalently $B_{k+1}(t+1)=\sum_j\binom{k+1}{j}B_{k+1-j}\,t^{\,j}+(k+1)t^{\,k}$ --- the correction
	arising solely from $j=k$, where $B_1(1)\neq B_1$ --- separates the displacement:
	\begin{equation}\label{eq:Mkj}
		S_{k+1}=\sum_{j=0}^{k+1}\binom{k+1}{j}B_{k+1-j}\,M_{k,j}+(k+1)M_{k,k},\qquad
		M_{k,j}:=\sum_{i=1}^n p_i^{-k}\,t_i^{\,j},
	\end{equation}
	the mixed power sums of the displacements.  At the central point $t=0$ (an integer mean),
	$M_{k,0}=\Sigma_k:=\sum_ip_i^{-k}$ and $M_{k,j\ge1}=0$, so
	$S_{k+1}=B_{k+1}\Sigma_k$.  The extra term $(k+1)M_{k,k}$ is precisely the ``elementary tail'' of
	Remark~\ref{rem:no-tail}: at $n=2$ it reduces to $(k+1)p^{-k}t_1^{\,k}$, the last term of
	\eqref{eq:n2-Ak}.

\begin{remark}[Parity at an integer mean]\label{rem:parity}
	When $Np_i\in\Z$ for every $i$, the mode is the central point (so $t_i=0$, $a_i=1$) and
	$c_k=\frac{(-1)^{k+1}}{k(k+1)}\bigl(1-\Sigma_k\bigr)B_{k+1}$, a \emph{product}; since
	$B_{k+1}=0$ for even $k$, \emph{only odd powers $N^{-1},N^{-3},\dots$ survive in
	$\log\Pr$}, exactly as in the one-dimensional Stirling series.  Away from the mode all orders
	appear.  This is the multivariate form of the parity phenomenon of \cite{be_paper23}.
\end{remark}

\begin{proposition}[$n=2$ recovers the binomial local mass; no elementary tail]\label{prop:n2}
	For $n=2$ (so $t_2=-t_1$, $p_1=p$, $p_2=q$), $c_k$ of \eqref{eq:Sk} equals the binomial
	local-mass coefficient $A_k$ of \cite{be_paper22} (used in \cite{be_paper23}):
	\begin{equation}\label{eq:n2-Ak}
		c_k=\frac{(-1)^{k+1}}{k(k+1)}\Bigl[B_{k+1}-\bigl(p^{-k}+(-1)^{k+1}q^{-k}\bigr)B_{k+1}(t_1)\Bigr]
		+\frac{(-1)^kt_1^{\,k}}{k\,p^k}.
	\end{equation}
\end{proposition}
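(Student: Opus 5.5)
The plan is a direct specialisation of \eqref{eq:Sk} to $n=2$. Set $p_1=p$, $p_2=q$, $t_2=-t_1$ (from \eqref{eq:tsum}), and write $t=t_1$. The two shifts are $a_1=t+1$ and $a_2=1-t$. Then \eqref{eq:logR} reads
\[
c_k=\frac{(-1)^{k+1}}{k(k+1)}\Bigl[B_{k+1}-p^{-k}B_{k+1}(t+1)-q^{-k}B_{k+1}(1-t)\Bigr].
\]
The task is to bring both Bernoulli polynomials to the common argument $t$ using two classical identities.

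First, apply the reflection formula $B_{m}(1-x)=(-1)^{m}B_{m}(x)$ with $m=k+1$. This gives $q^{-k}B_{k+1}(1-t)=(-1)^{k+1}q^{-k}B_{k+1}(t)$, which produces the combination $p^{-k}+(-1)^{k+1}q^{-k}$ in \eqref{eq:n2-Ak}.

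Second, apply the difference equation $B_{m}(x+1)=B_{m}(x)+m\,x^{m-1}$, again with $m=k+1$. This gives $p^{-k}B_{k+1}(t+1)=p^{-k}B_{k+1}(t)+(k+1)p^{-k}t^{k}$. Substituting, the $B_{k+1}(t)$ parts assemble into the bracket of \eqref{eq:n2-Ak}. The leftover piece is
\[
-\frac{(-1)^{k+1}}{k(k+1)}(k+1)\,p^{-k}t^{k}=\frac{(-1)^k t^{k}}{k\,p^{k}},
\]
which is exactly the last term of \eqref{eq:n2-Ak}. It is also the $n=2$ instance of the extra term $(k+1)M_{k,k}$ in \eqref{eq:Mkj}: only bin $1$ contributes a shift by $+1$ when the polynomials are written at argument $t_1$, so only the $p^{-k}t_1^k$ part remains.

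The last step is to identify this expression with $A_k$ of \cite{be_paper22}. I would write out that paper's binomial local-mass coefficient and check that its conventions match: the same large variable $N$, the same displacement $t=m-Np$, and the classical $B_1=-\tfrac12$. As an independent check, at $k=1$ the formula should reduce to the binomial case of \eqref{eq:c1}, namely $c_1=\tfrac1{12}(1-\tfrac1p-\tfrac1q)-\tfrac{t^2+t}{2p}-\tfrac{t^2-t}{2q}$, and this can be verified by hand. The algebra above is routine. The main obstacle I expect is bookkeeping of conventions against \cite{be_paper22}, in particular whether $A_k$ is stated with argument $t$ or $t+1$ and with which sign of $t$. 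If they differ, I would convert using the same two Bernoulli identities. The phrase ``no elementary tail'' should then be read as the statement that, written in $a_i$, the tail is absorbed into the Bernoulli polynomials, as in Remark~\ref{rem:no-tail}.
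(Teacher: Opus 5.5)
Your argument is correct and is exactly the paper's proof: the reflection $B_{k+1}(1-t_1)=(-1)^{k+1}B_{k+1}(t_1)$ for bin $2$ and the Appell difference $B_{k+1}(t_1+1)=B_{k+1}(t_1)+(k+1)t_1^{\,k}$ for bin $1$, followed by substitution into \eqref{eq:Sk}. One side remark is loose, and the paper's own sentence on this point is equally loose: at $n=2$ one actually has $(k+1)M_{k,k}=(k+1)\bigl(p^{-k}+(-1)^kq^{-k}\bigr)t_1^{\,k}$, whose $q$-part cancels against the shift hidden in $q^{-k}B_{k+1}(t_2)=q^{-k}B_{k+1}(-t_1)$, so your leftover term is not literally the $n=2$ instance of $(k+1)M_{k,k}$ â€” but this does not affect the proof.
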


\begin{proof}
	Here $a_1=t_1+1$, $a_2=1-t_1$.  By the Appell difference $B_{k+1}(t_1+1)=B_{k+1}(t_1)+(k+1)t_1^k$
	and the reflection $B_{k+1}(1-t_1)=(-1)^{k+1}B_{k+1}(t_1)$,
	\begin{align*}
		S_{k+1}&=p^{-k}B_{k+1}(t_1+1)+q^{-k}B_{k+1}(1-t_1)\\
		&=\bigl(p^{-k}+(-1)^{k+1}q^{-k}\bigr)B_{k+1}(t_1)+(k+1)p^{-k}t_1^k,
	\end{align*}
	and substituting into \eqref{eq:Sk} gives \eqref{eq:n2-Ak}.
\end{proof}

\begin{remark}[The binomial elementary term is a Bernoulli-shift difference]\label{rem:no-tail}
	The last term of \eqref{eq:n2-Ak} is exactly the ``elementary tail'' that the binomial treatment
	\cite{be_paper23} carries separately.  In the multinomial variables it is not a residue: it is
	merely the difference between writing the Bernoulli argument as $t_i+1$ (the natural shift
	$a_i=t_i+1$, symmetric across bins) and as $t_i$.  With the shift $a_i=t_i+1$ there is no separate
	tail --- the numerator factor $\Gamma(N+1)$ and the $n$ denominators enter symmetrically --- so the
	$n\ge2$ expansion is structurally cleaner than its $n=2$ specialisation.
\end{remark}

\subsection{The carrier $\Sigma_k$, the even normalisation, and parity}\label{sec:xiform}

	The single symbol that plays
	the role of the binomial $\Xi_k(p)=p^{-k}+(-1)^{k+1}q^{-k}$ of \cite{be_paper23} is the plain
	\emph{power sum of reciprocal probabilities}
	\begin{equation}\label{eq:Sigma}
		\Sigma_k:=\sum_{i=1}^n p_i^{-k},
	\end{equation}
	which carries all the $p$-dependence at the central point: by \eqref{eq:Sk} with $t=0$,
	\[
		c_k\big|_{t=0}=\frac{(-1)^{k+1}}{k(k+1)}\bigl(1-\Sigma_k\bigr)B_{k+1}.
	\]
	The displacement is carried by the single symbol $S_{k+1}$ of \eqref{eq:Sk}; unlike the binomial
	$\Xi_k$, it has \emph{no alternating sign}, because all $n$ denominators are on the same footing.

\paragraph{The even normalisation.}  As in \cite{be_paper23} there is a second normalisation with a
	definite parity.  Replacing $Np_i$ by the actual count $m_i$ in the prefactor, i.e.\ using the
	\emph{entropy} prefactor
	\begin{equation}\label{eq:entropy-pref}
		\Bigl(\frac{N}{(2\pi)^{n-1}\prod_i m_i}\Bigr)^{1/2}
		\quad\Bigl(=\ (2\pi N)^{-(n-1)/2}\textstyle\prod_ip_i^{-1/2}\ \text{at leading order}\Bigr),
	\end{equation}
	turns the log-coefficients into
	\begin{equation}\label{eq:chat}
		\widehat c_k=\frac{(-1)^{k+1}}{k(k+1)}\bigl[B_{k+1}-\widehat S_{k+1}\bigr],\qquad
		\widehat S_{k+1}:=\sum_{i=1}^n p_i^{-k}\,\widehat B_{k+1}(t_i),
	\end{equation}
	where $\widehat B_m(t)=\tfrac12[B_m(t)+B_m(t+1)]=B_m(t)+\tfrac m2t^{m-1}$ is the \emph{even} Appell
	sequence of \cite{be_paper23}; the half-unit shift it encodes is the continuity correction of
	\cite{cressie1978}.  Then $\widehat S_{k+1}$
	has definite parity,
	\begin{equation}\label{eq:Shat-parity}
		\widehat S_{k+1}(-t)=(-1)^{k+1}\widehat S_{k+1}(t),
	\end{equation}
	so that $\widehat c_k$ is \emph{even} in the displacement for odd $k$ and \emph{odd} for even $k$.
	In particular the even-$k$ coefficients are odd in $t$ and \emph{vanish at $t=0$}, recovering
	Remark~\ref{rem:parity}; and the two normalisations differ by exactly the skewness terms, e.g.
	\begin{equation}\label{eq:chat1}
		\widehat c_1=\tfrac1{12}(1-\Sigma_1)-\tfrac12\sum_ip_i^{-1}t_i^2
		=c_1+\tfrac12\sum_ip_i^{-1}t_i ,
	\end{equation}
	the linear ``skewness'' term $-\tfrac12\sum_ip_i^{-1}t_i$ being removed.  This is the $n$-bin form
	of the two Appell sequences ($B$ standard, $\widehat B$ symmetric) of \cite{be_paper23}.

\subsection{The symmetric case is the central multinomial coefficient}\label{sec:symmetric}

	Take $p_i=1/n$ for all $i$ and $N$ a multiple of $n$, at the mode $m_i=N/n$.  Then \eqref{eq:Sk}
	gives $S_{k+1}=n\cdot n^kB_{k+1}=n^{k+1}B_{k+1}$ and
	\[
		c_k=\frac{(-1)^{k+1}}{k(k+1)}\bigl(1-n^{k+1}\bigr)B_{k+1},
	\]
	i.e.\ the expansion of $\binom{N}{N/n,\dots,N/n}$.  Setting $N=n\ell$ this is precisely the central
	multinomial coefficient $\binom{n\ell}{\ell,\dots,\ell}=\Gamma(n\ell+1)/\Gamma(\ell+1)^n$ of
	\cite[\S3]{be043} (their $k$ bins are our $n$; their large variable $\ell$ is our
	$N/n$), recovered here as the diagonal special case of the general non-uniform expansion.

\section{Integer means and oscillation}\label{sec:nodichotomy}

\begin{remark}[Integer means]\label{rem:integer}
	If $Np_i\in\Z$ for all $i$ then $d^\star=0$ and the point $m=(Np_1,\dots,Np_n)$ lies on the slice.
	It is the \emph{unique} mode, for every $n\ge2$: the admissible multiplier interval of
	Theorem~\ref{thm:mode}(c) is $[\max_iNp_i/p_i,\min_i(Np_i+1)/p_i]=[N,\,N+1/\max_ip_i]$, which has
	nonempty interior, so Proposition~\ref{prop:mode-ties-floor} applies and there is no tie.  All
	$t_i$ vanish and the coefficients \eqref{eq:Sk} do not oscillate.

	The binomial is not an exception to this.  A binomial tie occurs when $(N+1)p\in\Z$, not when
	$Np\in\Z$ (Corollary~\ref{cor:binom}); and if $Np\in\Z$ the mode is the single point $Np$, exactly
	as above.  What is genuinely different for $n\ge3$ is not the behaviour at integer means but the
	structure \emph{away} from them: the tie locus $q_{(N)}=q_{(N+1)}$ and the oscillation described
	in Remark~\ref{rem:combinatorics}.
\end{remark}

\begin{remark}[The oscillation is apportionment combinatorics]\label{rem:combinatorics}
	In general
	\[
		(\{Np_1\},\dots,\{Np_n\})\in\Bigl\{\theta:\sum_i\theta_i=d^\star\Bigr\}.
	\]
	The coefficients \eqref{eq:Sk} oscillate with this vector, and the mode-rounding of
	Remark~\ref{rem:roundup} selects an up/down pattern by the order statistics of $h^{+}_i/p_i$.
	This is a genuine $(n-1)$-parameter oscillation with a rank-order rounding rule; for $n=2$ the
	single free displacement makes the choice binary.
\end{remark}

\begin{remark}[The first coefficient sees the mode]\label{rem:c1-mode}
	Among adjacent lattice points the probabilities differ to leading order through the
	displacement-dependent part of $c_1$: the Mahalanobis form $-\sum_it_i^2/(2p_i)$ and the linear
	``skewness'' shift $-\sum_it_i/(2p_i)$.  These do not merely see the mode to first order --- they
	determine it exactly, and the two Appell normalisations of \S\ref{sec:xiform} pick out the two
	classical divisor methods.
\end{remark}

\begin{proposition}[The first coefficient is the divisor method]\label{prop:c1divisor}
	Fix $N$ and $\mathbf p$ and let $m$ range over the slice $\sum_im_i=N$, with $t_i=m_i-Np_i$.
	\begin{enumerate}
	\item[\textup{(i)}] The maximisers of $-\sum_i(t_i^2+t_i)/(2p_i)$, equivalently the minimisers of
		the weighted least-squares functional $\sum_i\bigl(m_i+\tfrac12-Np_i\bigr)^2/p_i$, are
		\emph{exactly} the modes of Theorem~\ref{thm:mode}, ties included.
	\item[\textup{(ii)}] The minimisers of the even-normalised functional
		$\sum_i\bigl(m_i-Np_i\bigr)^2/p_i$ are exactly the Sainte-Lagu\"e\,/\,Webster allocations,
		characterised by $\dfrac{m_j-\tfrac12}{p_j}\le\dfrac{m_i+\tfrac12}{p_i}$ for all $i,j$.
	\end{enumerate}
\end{proposition}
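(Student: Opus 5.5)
The plan is to treat both parts as separable convex minimisation on the slice $\sum_im_i=N$, exactly as in the proof of Theorem~\ref{thm:mode}. The optimum of such a problem is characterised by the absence of an improving unit transfer.

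For (i), note first that $(m_i+\tfrac12-Np_i)^2=t_i^2+t_i+\tfrac14$. Summing with weights $1/p_i$ gives
\[
\sum_i\frac{(m_i+\tfrac12-Np_i)^2}{p_i}=\sum_i\frac{t_i^2+t_i}{p_i}+\frac14\Sigma_1 .
\]
So maximising $-\sum_i(t_i^2+t_i)/(2p_i)$ is the same as minimising this functional. Write it as $F(m)=\sum_i f_i(m_i)$ with $f_i(k)=(k+\tfrac12-Np_i)^2/p_i$, which is convex in $k$. Its forward difference is
\[
\Delta f_i(k)=\frac{2(k+1-Np_i)}{p_i}=\frac{2(k+1)}{p_i}-2N .
\]
A transfer of one unit from $j$ to $i$ changes $F$ by $\Delta f_i(m_i)-\bigl(f_j(m_j)-f_j(m_j-1)\bigr)$. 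The second term equals $2m_j/p_j-2N$, so the change is $2\bigl[(m_i+1)/p_i-m_j/p_j\bigr]$. The $-2N$ constants cancel precisely because the shift is $Np_i$ with $\sum_ip_i=1$ weighting; this is the key bookkeeping point.

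Hence ``no strictly improving transfer'' is exactly the transfer inequality $m_j/p_j\le(m_i+1)/p_i$ of Theorem~\ref{thm:mode}(b). To upgrade local optimality to global optimality I will repeat the exchange argument from the proof of (a)$\Leftrightarrow$(b). Given $m$ satisfying (b) and any $m'$, pick $i$ with $m'_i>m_i$ and $j$ with $m'_j<m_j$, and move a unit of $m'$ from $i$ to $j$. By convexity the change in $F$ is at most $2\bigl[m_j/p_j-(m_i+1)/p_i\bigr]\le0$, so $F(m)\le F(m')$. Conversely, any minimiser admits no strictly improving transfer, so it satisfies (b). Therefore the minimiser set is exactly the set of modes, ties included, since both sets are cut out by the same closed inequalities.

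For (ii) I run the same computation with $g_i(k)=(k-Np_i)^2/p_i$. Here $\Delta g_i(k)=(2k+1)/p_i-2N$, and the backward difference is $(2m_j-1)/p_j-2N$. The transfer change is therefore $2\bigl[(m_i+\tfrac12)/p_i-(m_j-\tfrac12)/p_j\bigr]$, giving the Webster inequalities as stated. The exchange argument transfers verbatim. Identifying these inequalities with Sainte-Laguë/Webster is the standard max--min characterisation of the divisor method with rounding at $\tfrac12$ (Pukelsheim), which I would cite rather than reprove.

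I expect the main obstacle to be only the care at empty bins ($m_j=0$, where no transfer out exists) and at ties. For $m_j=0$ the inequality in (i) is automatic, as remarked after Theorem~\ref{thm:mode}. In (ii) it reads $-\tfrac12/p_j\le(m_i+\tfrac12)/p_i$, which is also automatic. The convexity step in the exchange argument must use non-strict inequalities so that co-minimisers are retained.
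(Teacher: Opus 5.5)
Your proposal is correct and is essentially the paper's proof. Like the paper, you complete the square, compute the unit-transfer increment $2\bigl[(m_i+1)/p_i-m_j/p_j\bigr]$ (and its half-shifted Webster analogue in (ii)), and pass from the absence of improving transfers to global optimality via the exchange argument of Theorem~\ref{thm:mode}; you spell that step out more carefully than the paper's brief appeal to concavity along transfer directions. One small correction: the $-2N$ terms cancel simply because $Np_i/p_i=N$ for every bin, so $\sum_ip_i=1$ is not actually used there.
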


\begin{proof}
	(i)  Since $\sum_it_i=0$, we have
	\[
		\sum_i\frac{t_i^2+t_i}{2p_i}
		=\sum_i\frac{(m_i+\tfrac12-Np_i)^2}{2p_i}+\text{const},
	\]
	so the two formulations agree.
	Let $F=-\sum_i(t_i^2+t_i)/(2p_i)$.  Transferring one unit from $j$ to $i$ changes $F$ by
	\[
		-\frac{t_i+1}{p_i}+\frac{t_j}{p_j},
	\]
	up to terms of order $N^{-1}$ that cancel identically;
	carrying out the algebra with $t_i=m_i-Np_i$ and using $\sum_ip_i=1$, the increment is
	$\dfrac{m_j}{p_j}-\dfrac{m_i+1}{p_i}$.  It is $\le0$ for all $i,j$ precisely when
	Theorem~\ref{thm:mode}(b) holds, and the objective is strictly concave along each transfer
	direction, so its maximiser set coincides with the mode set.

	(ii)  Replacing $t_i^2+t_i$ by $t_i^2$ --- which is the passage from the Appell sequence $B$ to
	the even sequence $\widehat B$ of \S\ref{sec:xiform}, the linear term being exactly what
	$\widehat B$ removes --- the same computation gives the increment
	$\dfrac{m_j-\tfrac12}{p_j}-\dfrac{m_i+\tfrac12}{p_i}$, which is the Sainte-Lagu\"e condition.
\end{proof}

\begin{remark}[Two normalisations, two divisor methods]\label{rem:twodivisors}
	Proposition~\ref{prop:c1divisor} is the sharpest link between the two halves of this paper.  The
	quadratic (Mahalanobis) part of $c_1$ alone yields Webster's method; the linear skewness term
	$-\sum_it_i/(2p_i)$ --- present for the standard Appell sequence $B$ and absent for the even one
	$\widehat B$ --- is exactly what upgrades Webster to Jefferson.  So the two Appell normalisations
	of \S\ref{sec:xiform} are not merely a notational convenience: they correspond to the two
	classical divisor methods, and the shift $t_i\mapsto t_i+\tfrac12$ that relates them is the same
	shift that separates the two rounding rules.  This also explains why the mode is a
	\emph{least-squares} object, connecting Theorem~\ref{thm:mode} to the known least-squares
	characterisations of divisor methods \cite{pukelsheim}.
\end{remark}

\section{Ces\`aro averaging on the sub-simplex}\label{sec:cesaro}

	This section is a consequence of Theorem~\ref{thm:expansion}, not a further ingredient of it:
	nothing earlier depends on it, and a reader interested only in the mode and the local expansion
	may stop here.

	\emph{The object studied here is the formal coefficient family, not a probability.}  We write the
	oscillating part of the coefficients in the fractional-part variables $\theta_i=\{Np_i\}$, using
	the \emph{floor reference}
	\[
		t_i=-\theta_i,\qquad\text{i.e.}\qquad m_i=\lfloor Np_i\rfloor .
	\]
	This point is \emph{not} on the slice: $\sum_it_i=-d^\star$, which violates \eqref{eq:tsum}
	whenever $d^\star\ne0$, so $c_k(-\theta)$ is not the coefficient of any multinomial probability.
	We study it because it is the natural fixed reference in which the $c_k$ are sums of single-bin
	functions of $\theta$, which is what makes the averaging below possible.  By
	$B_{k+1}(1-\theta_i)=(-1)^{k+1}B_{k+1}(\theta_i)$, each $c_k$ then has a well-defined
	$\theta$-dependent component; for instance
	$c_1=\tfrac1{12}-\sum_iB_2(\{Np_i\})/(2p_i)$.

	Whether the results below survive the passage to an on-slice reference such as the mode is
	\emph{open}, and we do not claim it.  At the mode $t_i=-\theta_i+\delta_i(\theta)$, where the
	integer vector $\delta$ allocating the defect $d^\star$ is determined by the rank order of
	Remark~\ref{rem:roundup} and hence depends on \emph{all} $n$ coordinates simultaneously.  The
	single-bin structure is then lost, and with it the support argument on which
	Proposition~\ref{prop:invisible} rests.  Numerically the two references do not agree even in sign.
	For $n=3$ and the generic triple
	$\mathbf p=\bigl(\tfrac{\sqrt2}4,\tfrac{\pi}{12},1-\tfrac{\sqrt2}4-\tfrac{\pi}{12}\bigr)$, the
	Ces\`aro mean of the all-bins term $\prod_iB_2(t_i+1)$ over $N\le3\cdot10^5$ is
	\[
		2.6455\cdot10^{-4}\ \ \text{at the floor reference}\qquad\text{against}\qquad
		-6.010\cdot10^{-3}\ \ \text{at the mode},
	\]
	the former agreeing with the value $1/3780$ of Proposition~\ref{prop:allbins} to six digits.  A
	treatment of the on-slice averages would be a genuinely different computation.

	The coefficients \eqref{eq:cklist}--\eqref{eq:Qjlist} thus oscillate with the fractional-part
	vector
	\[
		\theta(N)=(\{Np_1\},\dots,\{Np_n\}).
	\]
	Because $\sum_ip_i=1$, the sum $\sum_i\{Np_i\}=d^\star(N)$ is always an integer, so $\theta(N)$ is
	confined to the \emph{sub-torus}
	\begin{equation}\label{eq:subtorus}
		H:=\Bigl\{\theta\in\mathbb T^n:\ \textstyle\sum_i\theta_i\equiv0\ (\mathrm{mod}\ 1)\Bigr\},
		\qquad \mathbb T=\R/\Z ,
	\end{equation}
	an $(n-1)$-dimensional set --- the torus analogue of the fractional-part sub-simplex of
	Remark~\ref{rem:combinatorics}.  Note that $H$ arises from the normalisation $\sum_ip_i=1$, not
	from the slice constraint $\sum_im_i=N$; the two are different conditions.  We call $\mathbf p$
	\emph{generic} when
	\begin{equation}\label{eq:generic}
		\{v\in\Z^n:\ v\cdot\mathbf p\in\Z\}=\Z\cdot(1,\dots,1),
	\end{equation}
	i.e.\ the only integer relation satisfied by $\mathbf p$ is $\sum_ip_i=1$ and its multiples.  For
	such $\mathbf p$, Weyl's theorem makes $\theta(N)$ equidistributed on $H$ with Haar measure
	$\mu_H$, so the Ces\`aro mean of any coefficient is its $\mu_H$-average; \eqref{eq:generic} is
	exactly the condition used in the character computation of
	Proposition~\ref{prop:allbins}.

\begin{proposition}[The sub-torus is invisible below order $N^{-n}$]\label{prop:invisible}
	Assume $\mathbf p$ generic in the sense of \eqref{eq:generic}, so that $\theta(N)$ equidistributes on $H$ and the
	Ces\`aro mean of a coefficient is its $\mu_H$-average.  Then, for every $j\le n-1$,
	\[
		\E_{\mu_H}\bigl[Q_j(\theta)\bigr]=\E_{\mathrm{unif}^{\otimes n}}\bigl[Q_j(\theta)\bigr],
	\]
	where $\mathrm{unif}^{\otimes n}$ is the product of $n$ independent uniforms on $[0,1)$.  Thus the
	constraint $\sum_i\theta_i\in\Z$ does not affect the averaged coefficients
	$Q_1,\dots,Q_{n-1}$.
\end{proposition}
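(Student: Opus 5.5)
Working at the floor reference $t_i=-\theta_i$, the plan is to show that $Q_j(\theta)$ is a polynomial in sums of single-bin functions whose monomials each involve at most $j$ distinct bins. The Haar measure $\mu_H$ on $H$ cannot be told apart from the product uniform measure on functions that depend on at most $n-1$ of the coordinates.

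\textbf{Step 1 (support of the coefficients).} By \eqref{eq:Sk} with $t_i=-\theta_i$, each $S_{k+1}=\sum_i p_i^{-k}B_{k+1}(1-\theta_i)$ is a sum of single-bin functions $f_{k,i}(\theta_i)$. Hence $c_k$ is affine in such sums. $Q_j$ is the complete Bell polynomial in $c_1,\dots,c_j$, so it is a weighted-homogeneous polynomial in the $c_k$ of total weight $j$, with $c_k$ of weight $k$. A monomial $c_{k_1}\cdots c_{k_r}$ with $\sum k_\ell=j$ has $r\le j$ factors. Expanding each factor as a sum over bins, the monomial becomes a sum of products $\prod_{\ell}f_{k_\ell,i_\ell}(\theta_{i_\ell})$, each depending on at most $r\le j\le n-1$ distinct coordinates. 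So $Q_j$ is a finite linear combination of functions of the form $g(\theta_{i_1},\dots,\theta_{i_s})$ with $s\le n-1$ and $g$ a polynomial in the listed variables, using only their fractional parts.

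\textbf{Step 2 (marginals of $\mu_H$).} Next I will show that the projection of $\mu_H$ onto any $s\le n-1$ coordinates is the uniform product measure on $\mathbb T^s$. The cleanest route is through characters. The Fourier coefficient of $\mu_H$ at $v\in\Z^n$ is $1$ if $v\in\Z(1,\dots,1)$ and $0$ otherwise, since the annihilator of $H$ is exactly $\Z(1,\dots,1)$. A character depending only on the coordinates in a set $I$ with $|I|\le n-1$ has $v_i=0$ for $i\notin I$. Such a $v$ is a multiple of $(1,\dots,1)$ only if $v=0$. So all nontrivial Fourier coefficients of the marginal vanish, and the marginal is Haar measure on $\mathbb T^s$.

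Alternatively, I can parametrise $H$ by $(\theta_1,\dots,\theta_{n-1})$ free, with $\theta_n\equiv-\sum_{i<n}\theta_i$. Any $n-1$ coordinates are then uniform, by a unimodular change of variables.

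\textbf{Step 3 (conclusion).} Integrate $Q_j$ term by term. Each term depends on at most $n-1$ coordinates, so by Step 2 its $\mu_H$-integral equals its product-uniform integral. The functions are bounded and piecewise polynomial in $\{\theta_i\}$, which justifies the termwise integration. The Ces\`aro identification follows from the Weyl equidistribution under \eqref{eq:generic} stated before the proposition. Riemann integrability of the discontinuous fractional-part functions needs a remark: their discontinuity sets are $\mu_H$-null.

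The main obstacle is the bookkeeping in Step 1. The required check is that no monomial of $Q_j$ couples more than $j$ bins; this relies on the weights $k\ge1$ of the $c_k$. This is exactly why the threshold is $n$. At $j=n$ the all-bins product $\prod_i B_2(\cdot)$ from $c_1^n$ appears, and it is the term where $H$ becomes visible, as in Proposition~\ref{prop:allbins}.
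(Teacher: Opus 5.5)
Your proposal is correct and follows the paper's proof. The paper likewise notes that any $\le n-1$ of the $\theta_i$ are jointly uniform under $\mu_H$, using the parametrisation by $n-1$ free coordinates (your alternative route; your character argument is an equivalent check). It also observes, as you do, that each Bell monomial $c_{r_1}\cdots c_{r_\ell}$ of $Q_j$ expands into single-bin products touching at most $\ell\le j\le n-1$ bins. Your integrability caveat is harmless but unnecessary: $B_{k+1}(1-\{x\})$ with $k+1\ge2$ is already continuous on $\mathbb T$.
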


\begin{proof}
	$H$ is the kernel of $\theta\mapsto\sum\theta_i$; parametrising by any $n-1$ coordinates leaves
	them free and uniform on $\mathbb T^{n-1}$, so any $\le n-1$ of the $\theta_i$ are jointly uniform.
	A product $\prod_{i\in S}(\cdots)$ with $|S|\le n-1$ therefore factors.  Finally $Q_j$ is a Bell
	polynomial in $c_1,\dots,c_j$, so each monomial is a product $c_{r_1}\cdots c_{r_\ell}$ with
	$r_1+\dots+r_\ell=j$.  Since every $c_r=\sum_ip_i^{-r}B_{r+1}(t_i+1)$ is a sum over single bins,
	expanding such a product gives terms involving at most $\ell\le j$ distinct bins.  For $j<n$ no
	term binds all $n$ coordinates, so the $\mu_H$-average factors through order $N^{-(n-1)}$.
\end{proof}

Equivalently, the first $n-1$ averaged coefficients are obtained by removing the Bernoulli
oscillation bin by bin, as in the one-dimensional averaging of \cite[\S8]{be_paper23}.  The first
coefficient where the sub-simplex constraint can enter is treated next.

\begin{proposition}[The first all-bins correlation]\label{prop:allbins}
	The first term binding all $n$ coordinates is the $\tfrac1{n!}c_1^{\,n}$ contribution to $Q_n$,
	carrying $\prod_iB_2(\{Np_i\})$.  Its $\mu_H$-average is nonzero:
	\begin{equation}\label{eq:allB2}
		\E_{\mu_H}\Bigl[\prod_{i=1}^nB_2(\{Np_i\})\Bigr]
		=\frac{(-1)^{n+1}2^nB_{2n}}{(2n)!}
		=\frac{2\zeta(2n)}{(2\pi^2)^n},
	\end{equation}
	whereas the product average under independent coordinates gives $0$.  More generally, with
	$\widehat B_k(m):=-k!/(2\pi im)^k$ the Fourier coefficients of $B_k(\{\cdot\})$ and $K:=\sum_ik_i$,
	\begin{equation}\label{eq:mixed}
		\begin{aligned}
		\E_{\mu_H}\Bigl[\prod_{i=1}^nB_{k_i}(\{Np_i\})\Bigr]
		&=\sum_{m\neq0}\prod_{i=1}^n\widehat B_{k_i}(m)\\
		&=\begin{cases}
		\dfrac{2(-1)^{\,n-K/2}\bigl(\prod_ik_i!\bigr)\zeta(K)}{(2\pi)^{K}},
			& K\ \text{even},\\[4pt]
		0, & K\ \text{odd}.
		\end{cases}
		\end{aligned}
	\end{equation}
\end{proposition}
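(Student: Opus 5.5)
The plan is a Fourier computation on the sub-torus $H$, taking the general mixed formula \eqref{eq:mixed} first and reading off \eqref{eq:allB2} as the case $k_i=2$.

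\textbf{Fourier expansion.} For $k\ge2$ I will use the absolutely convergent Fourier series
\[
	B_k(\{x\})=\sum_{m\neq0}\widehat B_k(m)\,e^{2\pi i m x},\qquad
	\widehat B_k(m)=-\frac{k!}{(2\pi i m)^k}.
\]
For $k=1$ the series converges only conditionally, and I would treat that case separately by a symmetric-truncation limit. Expanding the product gives
\[
	\prod_iB_{k_i}(\theta_i)=\sum_{m\in(\Z\setminus\{0\})^n}\Bigl(\prod_i\widehat B_{k_i}(m_i)\Bigr)e^{2\pi i\,m\cdot\theta},
\]
and I will integrate this term by term against $\mu_H$, which absolute convergence justifies.

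\textbf{Characters of $H$.} The characters of $\mathbb T^n$ that are trivial on $H=\ker(\theta\mapsto\sum\theta_i)$ are exactly those with $m\in\Z\cdot(1,\dots,1)$. Hence $\int_He^{2\pi i m\cdot\theta}\,d\mu_H$ equals $1$ if all $m_i$ are equal, and $0$ otherwise. Only the diagonal terms $m_i=m\neq0$ survive, which gives the first equality in \eqref{eq:mixed}. Genericity \eqref{eq:generic} enters only in identifying the Ces\`aro mean with the $\mu_H$-average, via Weyl's criterion: $\frac1N\sum_{N'\le N}e^{2\pi iN'(v\cdot\mathbf p)}\to0$ unless $v\cdot\mathbf p\in\Z$, i.e.\ unless $v$ is diagonal.

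\textbf{Evaluating the diagonal sum.} The diagonal sum is
\[
	\sum_{m\neq0}\prod_i\Bigl(-\frac{k_i!}{(2\pi i m)^{k_i}}\Bigr)
	=(-1)^n\frac{\prod_ik_i!}{(2\pi i)^K}\sum_{m\neq0}m^{-K}.
\]
The sum over $m\neq0$ vanishes when $K$ is odd. When $K$ is even it equals $2\zeta(K)$, and $i^{-K}=(-1)^{K/2}$, which gives the stated sign $(-1)^{n-K/2}$.

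\textbf{The case $k_i=2$.} Here $K=2n$, and the average is
\[
	\frac{2\cdot2^n(-1)^{n-n}\zeta(2n)}{(2\pi)^{2n}}=\frac{2\zeta(2n)}{(2\pi^2)^n}.
\]
Euler's formula $\zeta(2n)=(-1)^{n+1}B_{2n}(2\pi)^{2n}/(2(2n)!)$ then converts this to $(-1)^{n+1}2^nB_{2n}/(2n)!$. Under independent uniform coordinates, by contrast, each factor $B_2(\theta_i)$ has mean $\int_0^1B_2=0$, so the product has mean $0$.

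\textbf{Structural claim.} To show that $\tfrac1{n!}c_1^n$ is the first term binding all bins, I will reuse the argument of Proposition~\ref{prop:invisible}. Each $c_r$ is a sum of single-bin terms, so a monomial $c_{r_1}\cdots c_{r_\ell}$ in $Q_j$ involves at most $\ell\le j$ bins. All $n$ bins therefore first appear at $j=n$, and only through $c_1^n$, whose coefficient in the Bell polynomial is $1/n!$. Expanding $c_1=\tfrac1{12}-\sum_iB_2(t_i+1)/(2p_i)$ at the floor reference, and using $B_2(1-\theta)=B_2(\theta)$, the all-distinct-bins part is a multiple of $\prod_iB_2(\{Np_i\})$.

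\textbf{Main obstacle.} The delicate point is $k_i=1$ in \eqref{eq:mixed}, where the Fourier series of $B_1(\{x\})$ is not absolutely convergent. The plan there is to use symmetric partial sums (Fej\'er or symmetric truncation) and dominated convergence on $H$. A further subtlety is that $B_1(\{x\})$ is discontinuous at integers, but $H\cap\{\theta_i=0\}$ has $\mu_H$-measure zero, so the average is unaffected. With at least two factors the diagonal sum still converges absolutely once $K\ge2$, so the formula should persist.
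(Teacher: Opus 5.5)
Your proposal is correct and follows the paper's proof: Fourier-expand each $B_{k_i}(\{\cdot\})$, keep only the diagonal characters $m(1,\dots,1)$ that are trivial on $H$, and evaluate $\sum_{m\ne0}m^{-K}$ through $\zeta(K)$ and Euler's formula. Your extra care over term-by-term integration fills in a justification the paper leaves implicit: absolute convergence when all $k_i\ge2$, and, when some $k_i=1$, bounded Fej\'er or symmetric partial sums converging $\mu_H$-a.e.\ together with absolute convergence of the diagonal series for $K\ge2$.
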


	For $n=2,3,4$ the values in \eqref{eq:allB2} are
	$\tfrac1{180}$, $\tfrac1{3780}$ and $\tfrac1{75600}$.

\begin{proof}
	Only the ``diagonal'' characters $m(1,\dots,1)$, $m\in\Z$, are trivial on $H=\ker(\sum)$, so
	\[
		\E_{\mu_H}[e^{2\pi i\sum m_i\theta_i}]=\mathbf 1[m_1=\dots=m_n].
	\]
	Expanding each Bernoulli in its Fourier series and using $\widehat B_k(0)=0$ leaves
	$\sum_{m\neq0}\prod_i\widehat B_{k_i}(m)
	=\frac{\prod_i(-k_i!)}{(2\pi i)^K}\sum_{m\neq0}m^{-K}$, which is \eqref{eq:mixed}
	($\sum_{m\neq0}m^{-K}=2\zeta(K)$ for even $K$, $0$ for odd $K$; $(2\pi i)^K=(2\pi)^K(-1)^{K/2}$).
	For $k_i\equiv2$: $K=2n$, giving \eqref{eq:allB2}.
\end{proof}

\begin{corollary}[The order at which the sub-torus first bites is exactly $n$]\label{cor:ordern}
	Under the hypotheses above, $\E_{\mu_H}[Q_j]=\E_{\mathrm{unif}^{\otimes n}}[Q_j]$ for $j\le n-1$,
	while
	\[
		\E_{\mu_H}[Q_n]-\E_{\mathrm{unif}^{\otimes n}}[Q_n]
		=\frac{(-1)^n}{2^n\prod_ip_i}\cdot\frac{2\zeta(2n)}{(2\pi^2)^n}\ \neq\ 0 .
	\]
\end{corollary}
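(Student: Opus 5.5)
The first claim, for $j\le n-1$, is Proposition~\ref{prop:invisible}, so the work lies in $Q_n$. I will write $Q_n$ as the complete Bell polynomial in $c_1,\dots,c_n$ and sort its monomials $c_{r_1}\cdots c_{r_\ell}$ (with $r_1+\dots+r_\ell=n$) by the number of distinct bins they can involve. Each $c_r$ is a constant plus a sum of single-bin terms $p_i^{-r}B_{r+1}(t_i+1)$, with $t_i=-\theta_i$ at the floor reference. Expanding a monomial therefore gives products over at most $\ell$ bins, each bin possibly repeated. A product whose support is a proper subset of $\{1,\dots,n\}$ has the same average under $\mu_H$ and under $\mathrm{unif}^{\otimes n}$, by the marginal argument in the proof of Proposition~\ref{prop:invisible}. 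Only terms touching all $n$ bins can differ, and these require $\ell\ge n$. Since $\ell\le n$, this forces $\ell=n$ and $r_1=\dots=r_n=1$, so the only such monomial is $c_1^n/n!$. This is the first assertion of Proposition~\ref{prop:allbins}, and I will re-verify it in this form.

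Next I expand $c_1^n$ with $c_1=\tfrac1{12}-\sum_iB_2(t_i+1)/(2p_i)$. I need $B_2(1-\theta_i)=B_2(\theta_i)=B_2(\{Np_i\})$, which follows from the reflection identity with $k=1$. In the multinomial expansion of $\bigl(\tfrac1{12}-\sum_i x_i\bigr)^n$, the terms that touch every bin must use each $x_i$ exactly once. They therefore come only from the monomial $n!\prod_i x_i$ with coefficient $(-1)^n$. After dividing by $n!$, the all-bins part of $Q_n$ is
\[
\frac{(-1)^n}{2^n\prod_ip_i}\prod_{i=1}^nB_2(\{Np_i\}).
\]

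Under the product measure the average of this part is $\prod_i\int_0^1B_2=0$. Under $\mu_H$ it is $2\zeta(2n)/(2\pi^2)^n$ by \eqref{eq:allB2}. The difference is therefore the stated constant, and it is nonzero because $\zeta(2n)>0$.

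The main obstacle is the bookkeeping in the first step. I must confirm that no other monomial can reach all $n$ bins. I must also check that the non-all-bins terms of $c_1^n$, including the cross terms with the constant $\tfrac1{12}$ and those with repeated bins such as $B_2(\theta_i)^2$, really have the same average under both measures. Repetition does not enlarge the support, so the marginal-uniformity argument applies to them. I would spell out that any function of at most $n-1$ of the coordinates has equal averages under the two measures.
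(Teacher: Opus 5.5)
Your proposal is correct and follows essentially the same route as the paper. Both arguments reduce the case $j\le n-1$ to Proposition~\ref{prop:invisible}, observe that only the all-bins part of $\tfrac1{n!}c_1^{\,n}$ can bind all $n$ coordinates, and then compare its two averages via \eqref{eq:allB2} and $\int_0^1B_2=0$. You also make explicit details the paper leaves implicit: that $\ell=n$ forces $r_1=\dots=r_n=1$, that the coefficient $(-1)^n$ comes from the multinomial expansion, and that nonvanishing follows from $\zeta(2n)>0$.
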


\begin{proof}
	The first statement is Proposition~\ref{prop:invisible}.  For the second, expand $Q_n$ into
	monomials $c_{r_1}\cdots c_{r_\ell}$ with $\sum r_i=n$.  Every monomial with $\ell<n$ factors, and
	every monomial with $\ell=n$ other than $\tfrac1{n!}c_1^{\,n}$, involves at most $n-1$ distinct
	bins after expansion, so its two averages agree by the argument of
	Proposition~\ref{prop:invisible}.  The only surviving difference comes from the all-bins part of
	$\tfrac1{n!}c_1^{\,n}$, which carries $\prod_i\bigl(-B_2(\theta_i)/(2p_i)\bigr)$; its
	$\mu_H$-average is given by \eqref{eq:allB2} while its independent-coordinate average vanishes,
	since $\int_0^1B_2=0$.
\end{proof}

\begin{remark}[Rational $\mathbf p$: the orbit does not fill the sub-torus]\label{rem:rational-cesaro}
	For rational $\mathbf p=(a_1/b,\dots,a_n/b)$ with $\sum a_i=b$, $\theta(N)$ runs over the
	\emph{cyclic} orbit $\{N\mathbf a/b\bmod1\}$, which is one-dimensional and, for $n\ge3$, a proper
	subset of the $(n-1)$-dimensional sub-torus $H$.  The period-average is a resonance sum
	\begin{align*}
		\frac1P\sum_{N}\prod_iB_2(\{Na_i/b\})
		&=\sum_{\substack{m\in\Z^n\\ m\cdot\mathbf a\equiv0\ (b)}}\ \prod_i\widehat B_2(m_i)\\
		&=\underbrace{\frac{(-1)^{n+1}2^nB_{2n}}{(2n)!}}_
			{\substack{\text{diagonal }m\propto(1,\dots,1)\\ \text{always resonant}}}
		+\ (\text{additional resonances}),
	\end{align*}
	the diagonal being always present because $\sum a_i=b\equiv0$.  The additional part is
	arithmetic-dependent: it is positive in the tested cases, tends to $0$ along good rational
	approximations $b\to\infty$ (below), but is \emph{not} monotone in $b$
	(e.g.\ $(31,37,43)/111$ has a larger correction than $(13,17,29)/59$).  This is the genuine departure
	from the binomial: for $n=2$ both $H$ and the orbit are one-dimensional, so the orbit is a finite
	cyclic subgroup of $H$ (equal to it only in the limit $b\to\infty$), and rational and irrational
	both reduce to Raabe's $B_j(h)\mapsto b^{-j}B_j$ of \cite[\S8]{be_paper23}; for $n\ge3$ the orbit
	is one-dimensional inside an $(n-1)$-dimensional $H$, hence a thin subset, and no such clean
	substitution holds.  (Even at $n=2$ the resonance set is
	$\{m:\ m_1\equiv m_2\ \mathrm{mod}\ b/\!\gcd(a_1,b)\}$, not merely the diagonal.)
\end{remark}

\paragraph{Rational-period examples.}  For $n=3$, the sub-torus value in
	\eqref{eq:allB2} is $\tfrac1{3780}$.  The following period averages show the additional
	resonance term in Remark~\ref{rem:rational-cesaro} for several rational vectors
	$\mathbf p=(a_1/b,a_2/b,a_3/b)$:
	\[
	\begin{array}{c|ccc}
		(a_1,a_2,a_3)/b & (2,3,5)/10 & (5,7,11)/23 & (13,17,29)/59\\\hline
		\text{additional resonance}
		& 5.9\!\cdot\!10^{-4} & 1.0\!\cdot\!10^{-4} & 3.1\!\cdot\!10^{-5}
	\end{array}
	\]
	For comparison, along rational approximations to the fixed generic triple
	$\mathbf p=\bigl(\tfrac{\sqrt2}4,\ \tfrac{\pi}{12},\ 1-\tfrac{\sqrt2}4-\tfrac{\pi}{12}\bigr)$,
	the non-diagonal resonance contribution decays as follows:
	\[
	\begin{array}{c|ccc}
		b & 10^3 & 10^4 & 10^5\\\hline
		\text{additional resonance}
		& 1.2\!\cdot\!10^{-6} & 2.5\!\cdot\!10^{-8} & 3.7\!\cdot\!10^{-11}
	\end{array}
	\]

\section{Concluding remarks}\label{sec:conclusion}

	The two components of the paper use elementary methods: separable lattice concavity for the mode
	and a gamma-quotient Stirling development for the expansion.  The new points are the interpretation
	of the mode as a Jefferson apportionment, with its sharp quota violation, and the closed all-orders
	Bernoulli expansion for the multinomial local mass, with the binomial and central-multinomial cases
	as specialisations.

	Several questions are left open.  In the even (entropy) normalisation of \S\ref{sec:xiform} the
	multiplicative coefficients $\widehat Q_j$ have not been written out, and it is not clear whether
	the size-biased third normalisation of \cite{be_paper23} has an $n$-bin analogue.  For the
	Ces\`aro averages, the full $\mu_H$-average of $Q_j$ for $j\ge n$ --- combining the all-bins
	correlation \eqref{eq:allB2} with the lower-order product terms --- and a closed description of the
	rational period-average through its resonance lattice (Remark~\ref{rem:rational-cesaro}) remain to
	be done.

\end{document}